\newtheorem{theorem}{Theorem}[section]
\newtheorem{proposition}[theorem]{Proposition}
\newtheorem{definition}[theorem]{Definition}
\newtheorem{lemma}[theorem]{Lemma}
\newtheorem{corollary}[theorem]{Corollary}
\renewenvironment{proof}{\textbf{Proof.}}{\QED\bigskip}
\definecolor{ddarkbrown}{rgb}{0.5,0.2,0.05} \definecolor{bbluegray}{rgb}{0.05,0,0.5}
\newcommand{\BEAS}{\begin{eqnarray*}}
\newcommand{\EEAS}{\end{eqnarray*}}
\newcommand{\BEA}{\begin{eqnarray}}
\newcommand{\EEA}{\end{eqnarray}}
\newcommand{\BEQ}{\begin{equation}}
\newcommand{\EEQ}{\end{equation}}
\newcommand{\BIT}{\begin{itemize}}
\newcommand{\EIT}{\end{itemize}}
\newcommand{\BNUM}{\begin{enumerate}}
\newcommand{\ENUM}{\end{enumerate}}
\newcommand{\BA}{\begin{array}}
\newcommand{\EA}{\end{array}}
\newcommand{\cf}{{\it cf.}}
\newcommand{\ones}{\mathbf 1}
\newcommand{\reals}{{\mathbb R}}
\newcommand{\symm}{{\mbox{\bf S}}}  
\newcommand{\Tr}{\mathop{\bf Tr}}
\newcommand{\diag}{\mathop{\bf diag}}
\newcommand{\idm}{\mathbf{I}}
\newcommand{\Expect}{\textstyle\mathop{\bf E}}
\newcommand{\Var}{\mathop{\bf var{}}}
\newcommand{\QED}{~~\rule[-1pt]{6pt}{6pt}}
\newcommand{\var}{\mathop{\bf var}}
\newcommand{\vect}[1]{\mathop{\bf vec}(#1)}
\newcommand{\argmax}{\mathop{\rm argmax}}
\title{Convex Relaxations for Permutation Problems}
\author{Fajwel Fogel}
\address{C.M.A.P., \'Ecole Polytechnique,\vskip 0ex
Palaiseau, France.}
\email{fajwel.fogel@cmap.polytechnique.fr}
\author{Rodolphe Jenatton}
\address{C.M.A.P., \'Ecole Polytechnique \& Amazon Labs, Berlin.}
\email{rodolphe.jenatton@gmail.com}
\author{Francis Bach}
\address{INRIA, SIERRA Project-Team \& D.I., \vskip 0ex \'Ecole Normale Sup\'erieure, Paris, France.}
\email{francis.bach@ens.fr}
\author{Alexandre d'Aspremont} 
\address{CNRS \& D.I., UMR 8548, \vskip 0ex
\'Ecole Normale Sup\'erieure, Paris, France.}
\email{aspremon@ens.fr}
\keywords{Seriation, C1P, convex relaxation, quadratic assignment problem, shotgun DNA sequencing.}
\date{\today}
\subjclass[2010]{06A07, 90C27, 90C25, 92D20.}
\begin{document}

\begin{abstract} 
Seriation seeks to reconstruct a linear order between variables using unsorted, pairwise similarity information. It has direct applications in archeology and shotgun gene sequencing for example. We write seriation as an optimization problem by proving the equivalence between the seriation and combinatorial 2-SUM problems on similarity matrices (2-SUM is a quadratic minimization problem over permutations). The seriation problem can be solved exactly by a spectral algorithm in the noiseless case and we derive several convex relaxations for 2-SUM to improve the robustness of seriation solutions in noisy settings. These convex relaxations also allow us to impose structural constraints on the solution, hence solve semi-supervised seriation problems. We derive new approximation bounds for some of these relaxations and present numerical experiments on archeological data, Markov chains and DNA assembly from shotgun gene sequencing data.
\end{abstract}

\maketitle

\section{Introduction}

We study optimization problems written over the set of permutations. While the relaxation techniques discussed in what follows are applicable to a much more general setting, most of the paper is centered on the {\em seriation} problem: we are given a similarity matrix between a set of~$n$ variables and assume that the variables can be ordered along a chain, where the similarity between variables decreases with their distance within this chain. The seriation problem seeks to reconstruct this linear ordering based on unsorted, possibly noisy, pairwise similarity information. 

This problem has its roots in archeology~\citep{Robi51} and also has direct applications in e.g. envelope reduction algorithms for sparse linear algebra \citep{Barn95}, in identifying interval graphs for scheduling~\citep{Fulk65}, or in shotgun DNA sequencing where a single strand of genetic material is reconstructed from many cloned shorter reads (i.e.~small, fully sequenced sections of DNA)~\citep{Garr11,Meid98}. With shotgun gene sequencing applications in mind, many references focused on the {\em consecutive ones problem} (C1P) which seeks to permute the rows of a binary matrix so that all the ones in each column are contiguous. In particular, \cite{Fulk65} studied further connections to interval graphs and~\cite{Kend71} crucially showed that a solution to  C1P can be obtained by solving the seriation problem on the squared data matrix. We refer the reader to \citep{Ding04,Vuok10,Liiv10} for a much more complete survey of applications.

On the algorithmic front, the seriation problem was shown to be NP-complete by \cite{Geor97}. Archeological examples are usually small scale and earlier references such as~\citep{Robi51} used greedy techniques to reorder matrices. Similar techniques were, and are still used to reorder genetic data sets. More general ordering problems were studied extensively in operations research, mostly in connection with the quadratic assignment problem (QAP), for which several convex relaxations were derived in e.g. \citep{Lawl63,Zhao98}. Since a matrix is a permutation matrix if and only if it is both orthogonal and doubly stochastic, much work also focused on producing semidefinite relaxations to orthogonality constraints \citep{Nemi07,So09}. These programs are convex and can be solved using conic programming solvers, but the relaxations are usually very large and scale poorly. More recently however, \cite{Atki98} produced a spectral algorithm that exactly solves the seriation problem in a noiseless setting. They show that for similarity matrices computed from serial variables (for which a total order exists), the ordering of the second eigenvector of the Laplacian (a.k.a.~the Fiedler vector) matches that of the variables, in results that are closely connected to those obtained on the interlacing of eigenvectors for Sturm Liouville operators. A lot of work has focused on the minimum linear arrangement problem or 1-SUM, with \citep{Even00,Feig00,Blum00} and \citep{Rao05,Feig07,Char10} producing semidefinite relaxations with nearly dimension independent approximation ratios. While these relaxations form semidefinite programs that have an exponential number of constraints, they admit a polynomial-time separation oracle and can be solved using the ellipsoid method. The later algorithm being extremely slow, these programs have very little practical impact. Finally, seriation is also directly related to the manifold learning problem \citep{Wein06} which seeks to reconstruct a low dimensional manifold based on local metric information. Seriation can be seen as a particular instance of that problem, where the manifold is unidimensional but the similarity information is not metric.

Our contribution here is twofold. First, we explicitly write seriation as an optimization problem by proving the equivalence between the seriation and combinatorial 2-SUM problems on similarity matrices. 2-SUM, defined in e.g.~\citep{Geor97}, is a quadratic minimization problem over permutations. Our result shows in particular that  2-SUM is polynomially solvable for matrices coming from serial data. This quadratic problem was mentioned in~\citep{Atki98}, but no explicit connection was established between combinatorial problems like 2-SUM and seriation. While this paper was under review, a recent working paper by \citep{Laur14} has extended the results in Propositions ~\ref{prop:q-r} and~\ref{prop:q-r-converse} here to show that the QAP problem $Q(A,B)$ is solved by the spectral seriation algorithm when $A$ is a similarity matrix (satisfying the Robinson assumption detailed below) and $B$ is a Toeplitz dissimilarity matrix (e.g. $B_{ij}=(i-j)^2$ in the 2-SUM problem discussed here).

Second, we derive several new convex relaxations for the seriation problem. Our simplest relaxation is written over the set of doubly stochastic matrices and appears to be more robust to noise than the spectral solution in a number of examples. Perhaps more importantly, it allows us to impose additional structural constraints to solve semi-supervised seriation problems. We also briefly outline a fast algorithm for projecting on the set of doubly stochastic matrices, which is of independent interest. In an appendix, we also produce a semidefinite relaxation for the seriation problem using the classical lifting argument in \citep{Shor87,Lova91} written on a nonconvex quadratic program (QP) formulation of the combinatorial 2-SUM problem. Based on randomization arguments in \citep{Nest98a,dAsp10} for the MaxCut and $k$-dense-subgraph problems, we show that this relaxation of the set of permutation {\em matrices} achieves an approximation ratio of $O(\sqrt{n})$. We also recall how several other relaxations of the minimum linear arrangement  (MLA) problem, written on permutation {\em vectors}, can be adapted to get nearly dimension independent $O(\sqrt{\log n})$ approximation ratios by forming (exponentially large but tractable) semidefinite programs. While these results are of limited practical impact because of the computational cost of the semidefinite programs they form, they do show that certain QAP instances written on Laplacian matrices, such as the seriation problem considered here, are much simpler to approximate than generic QAP problems. They also partially explain the excellent empirical performance of our relaxations in the numerical experiments of Section~\ref{s:apps}.

The paper is organized as follows. In Section~\ref{s:ser}, we show how to decompose similarity matrices formed in the C1P problem as conic combinations of CUT matrices, i.e. elementary block matrices. This allows us to connect the solutions of the seriation and 2-SUM minimization problems on these matrices. In Section~\ref{s:relax} we use these results to write convex relaxations of the seriation problem by relaxing the set of permutation matrices as doubly stochastic matrices in a QP formulation of the 2-SUM minimization problem. Section~\ref{s:algos} briefly discusses first order algorithms solving the doubly stochastic relaxation and details in particular a block coordinate descent algorithm for projecting on the set of doubly stochastic matrices. Finally, Section~\ref{s:apps} describes applications and numerical experiments on archeological data, Markov chains and DNA assembly problems. In the Appendix, we describe larger semidefinite relaxations of the 2-SUM QP and obtain $O(\sqrt{n})$ approximation bounds using randomization arguments. We also detail several direct connections with the minimum linear arrangement problem.

\subsection*{Notation.} We use the notation $\mathcal{P}$ for both the set of permutations of $\{1,\ldots,n\}$ and the set of permutation matrices. The notation $\pi$ will refer to a permuted vector $(1,\ldots,n)^T$ while the notation $\Pi$ (in capital letter) will refer to the corresponding matrix permutation, which is a $\{0,1\}$ matrix such that $\Pi_{ij}=1$ iff $\pi(i)=j$. Moreover $y_\pi$ is the vector with coefficients $(y_{\pi(1)},\ldots,y_{\pi(n)})$ hence $\Pi y=y_\pi$ and $\Pi^Ty_\pi=y$. This also means that $A\Pi^T$ is the matrix with coefficients $A_{i\pi(j)}$, and $\Pi A \Pi^T$ is the matrix with coefficients $A_{\pi(i)\pi(j)}$. For a vector $y\in\reals^n$, we write $\Var(y)$ its variance, with $\Var(y)=\sum_{i=1}^n y_i^2/n -(\sum_{i=1}^n y_i/n)^2$, we also write $y_{[u,v]}\in\reals^{v-u+1}$ the vector $(y_u,\ldots,y_v)^T$. Here, $e_i \in \reals^{n}$ is $i$-the Euclidean basis vector and $\ones$ is the vector of ones. Recall also that the matrix product can be written in terms of outer products, with $AB=\sum_{i} A_{(i)}B^{(i)}$, with $A_{(i)}$ (resp. $B^{(i)}$) the $i$-th column (resp. row) of $A$ (resp. $B$). For a matrix $A\in\reals^{m \times n}$, we write $\vect{A}\in\reals^{mn}$ the vector formed by stacking up the columns of $A$. We write $\idm$ the identity matrix and $\symm_n$ the set of symmetric matrices of dimension $n$, $\|\cdot\|_F$ denotes the Frobenius norm, $\lambda_i(X)$ the $i^\mathrm{th}$ eigenvalue (in increasing order) of $X$ and $\|X\|_\infty=\|\vect{X}\|_\infty$.

\section{Seriation, 2-SUM \& consecutive ones}\label{s:ser}

Given a symmetric, binary matrix $A$, we will focus on variations of the following 2-SUM combinatorial minimization problem, studied in e.g. \citep{Geor97}, and written
\BEQ\label{eq:2sum-pb}
\BA{ll}
\mbox{minimize} & \sum_{i,j=1}^n A_{ij}(\pi(i)-\pi(j))^2\\
\mbox{subject to} & \pi \in \mathcal{P},
\EA\EEQ
where $\mathcal{P}$ is the set of permutations of the vector $(1,\ldots,n)^T$. This problem is used for example to reduce the envelope of sparse matrices and is shown in \cite[Th.\,2.2]{Geor97} to be NP-complete. When $A$ has a specific structure, \cite{Atki98} show that a related matrix ordering problem used for seriation can be solved explicitly by a spectral algorithm. However, the results in~\cite{Atki98} do not explicitly link spectral ordering and the optimum of~\eqref{eq:2sum-pb}. The main objective of this section is to show the equivalence between the 2-SUM and seriation problems for certain classes of matrices $A$. In particular, for some instances of $A$ related to seriation and consecutive one problems, we will show below that the spectral ordering directly minimizes the objective of problem~\eqref{eq:2sum-pb}. We first focus on binary matrices, then extend our results to more general unimodal matrices.

Let $A\in\symm_n$ and consider the following generalization of the 2-SUM minimization problem 
\BEQ\label{eq:ser-pb}
\BA{ll}
\mbox{minimize} & f(y_\pi) \triangleq \sum_{i,j=1}^n A_{ij}(y_{\pi(i)}-y_{\pi(j)})^2\\
\mbox{subject to} & \pi \in \mathcal{P},
\EA\EEQ
in the permutation variable $\pi$, where $y\in\reals^n$ is a given weight vector. The classical 2-SUM minimization problem~\eqref{eq:2sum-pb} is a particular case of problem~\eqref{eq:ser-pb} with $y_i=i$. The main point of this section is to show that if $A$ is the permutation of a similarity matrix formed from serial data, then minimizing~\eqref{eq:ser-pb} recovers the correct variable ordering. To do this, we simply need to show that when $A$ is correctly ordered, a monotonic vector $y$ solves~\eqref{eq:ser-pb}, since reordering $y$ is equivalent to reordering $A$. Our strategy is to first show that we can focus on matrices $A$ that are sums of simple CUT matrices, i.e.~symmetric block matrices with a single constant block \citep[see][]{Frie99}. We then show that all minimization problems~\eqref{eq:ser-pb} written on CUT matrices have a common optimal solution, where $y_\pi$ is monotonic. 

\subsection{Similarity, C1P \& unimodal matrices}\label{ss:bin}

We begin by introducing a few definitions on R-matrices (i.e. similarity matrices), C1P and unimodal matrices following \citep{Atki98}.

\begin{definition}{\bf (R-matrices)}\label{def:r-mat}
We say that the matrix $A\in\symm_n$ is an R-matrix (or Robinson matrix) iff it is symmetric and satisfies
$A_{i,j} \leq  A_{i,j+1}$ and $A_{i+1,j} \leq A_{i,j}$ in the lower triangle, where $1\leq j < i \leq n$.
\end{definition}

Another way to write the R-matrix conditions is to impose $A_{ij}\leq A_{kl}$ if $|i-j|\leq |k-l|$ off-diagonal, i.e.~the coefficients of $A$ decrease as we move away from the diagonal (cf. Figure~\ref{fig:r-mat}). In that sense, \mbox{R-matrices} are similarity matrices between variables organized on a {\em chain}, i.e. where the similarity $A_{ij}$ is monotonically decreasing with the distance between $i$ and $j$ on this chain. We also introduce a few definitions related to the consecutive ones problem (C1P) and its unimodal extension.

\begin{figure}[thbp]
\begin{center}
\includegraphics[scale=0.8]{./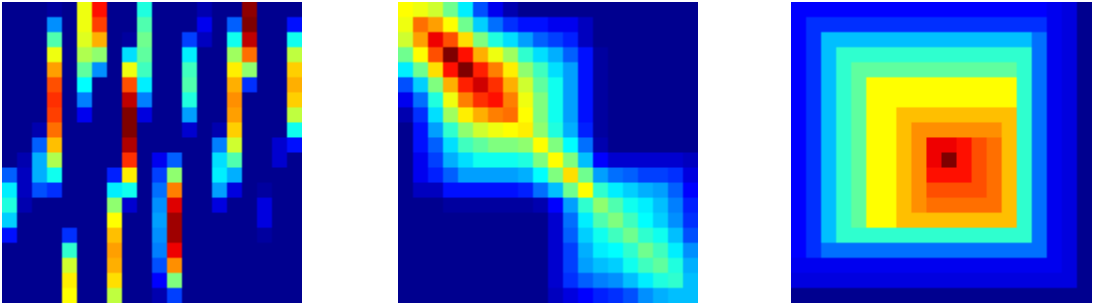}
\caption{A sample Q-matrix (see Def.~\ref{def:q-mat}), which has unimodal columns {\em (left)}, its ``circular square'' $A\circ A^T$ (see Def.~\ref{def:circ-prod}) which is an R-matrix {\em (center)}, and a matrix $a \circ a^T$ where $a$ is a unimodal vector {\em (right)}.\label{fig:r-mat}}
\end{center}
 \end{figure}

\begin{definition}{\bf (P-matrices)}\label{def:p-mat}
We say that the $\{0,1\}$-matrix $A\in\reals^{n \times m}$ is a  P-matrix (or Petrie matrix) iff for each column of $A$, the ones form a consecutive sequence.
\end{definition}

As in \citep{Atki98}, we will say that $A$ is {\em pre-R} (resp. {\em pre-P}) iff there is a permutation $\Pi$ such that $\Pi A \Pi^T$ is an R-matrix (resp.  $\Pi A$ is a P-matrix). Based on \cite{Kend71}, we also define a generalization of P-matrices called (appropriately enough) Q-matrices, i.e.~matrices with unimodal columns. 

\begin{definition}{\bf (Q-matrices)}\label{def:q-mat}
We say that a matrix $A\in\reals^{n \times m}$ is a Q-matrix if and only if each column of $A$ is unimodal, i.e.~the coefficients of each column increase to a maximum, then decrease.
\end{definition}

Note that R-matrices are symmetric Q-matrices. We call a matrix $A$ {\em pre-Q} iff there is a permutation $\Pi$ such that $\Pi A$ is a Q-matrix. Next, again based on \cite{Kend71}, we define the {\em circular product} of two matrices.

\begin{definition}\label{def:circ-prod}
Given $A,B^T\in\reals^{n \times m}$, their circular product $A \circ B$ is defined as
\[
(A \circ B)_{ij}=\sum_{k=1}^m \min\{A_{ik},B_{kj}\} \quad i,j=1,\ldots,n,
\]
note that when $A$ is a symmetric matrix, $A\circ A$ is also symmetric.
\end{definition}

Remark that when $A,B$ are $\{0,1\}$ matrices $\min\{A_{ik},B_{kj}\}=A_{ik}B_{kj}$, so the circular product matches the regular matrix product $AB$. Similarly, a $\{0,1\}$ matrix with the consecutive one property (C1P) is also unimodal. 

\subsection{Seriation on CUT matrices}\label{ss:cut}
We now introduce CUT matrices (named after the CUT decomposition in \citep{Frie99} whose definition is slightly more flexible), and first study the seriation problem on these simple block matrices. The motivation for this definition is that if $A$ is a $P$, $Q$ or $R$ matrix, then $A\circ A^T$ can we decomposed as a sum of CUT matrices. This is illustrated in Figure~\ref{fig:r-mat} and means that we can start by studying problem~\eqref{eq:ser-pb} on CUT matrices. 

\begin{definition}\label{def:cut-mat} For $u,v\in[1,n]$, we call $CUT(u,v)$ the matrix such that
\[
CUT(u,v)_{ij}=\left\{\BA{l}
1 \quad \mbox{if $u\leq i \leq v$ and $u\leq j \leq v$}\\
0 \quad \mbox{otherwise,}
\EA\right.
\]
i.e.~$CUT(u,v)$ is symmetric, block diagonal and has one square block equal to one.
\end{definition}

We first show that the objective of~\eqref{eq:ser-pb} has a natural interpretation when $A$ is a CUT matrix, as the variance of a subset of $y$ under a uniform probability measure.

\begin{lemma}\label{lem:var}
Suppose $A=CUT(u,v)$, then 
\[
f(y)=\sum_{i,j=1}^n A_{ij}(y_i-y_j)^2=(v-u+1)^2 \Var(y_{[u,v]}).
\]
\end{lemma}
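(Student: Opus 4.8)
The plan is to collapse the ambient double sum onto the single nonzero block of the CUT matrix, and then to recognize the resulting quantity as the variance of the corresponding subvector. By Definition~\ref{def:cut-mat}, the entry $A_{ij}=CUT(u,v)_{ij}$ equals $1$ exactly when both $i$ and $j$ lie in $\{u,\ldots,v\}$ and is zero otherwise, so every term involving an index outside $[u,v]$ drops out and
\[
f(y)=\sum_{i,j=1}^n A_{ij}(y_i-y_j)^2=\sum_{i=u}^{v}\sum_{j=u}^{v}(y_i-y_j)^2 .
\]
Setting $m=v-u+1$ and $z=y_{[u,v]}\in\reals^{m}$, the right-hand side is exactly $\sum_{i,j=1}^m (z_i-z_j)^2$, a quantity depending only on the entries of the block.

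The core step is then the identity relating this sum of pairwise squared differences to the empirical variance. I would expand $(z_i-z_j)^2=z_i^2-2z_iz_j+z_j^2$ and sum over the $m^2$ ordered pairs: each of the two square terms contributes $m\sum_i z_i^2$ since one index runs free, while the cross term contributes $2(\sum_i z_i)^2$, giving
\[
\sum_{i,j=1}^m (z_i-z_j)^2 = 2m\sum_{i=1}^m z_i^2 - 2\Big(\sum_{i=1}^m z_i\Big)^2 = 2m^2\,\Var(z),
\]
where the last equality is just the definition of $\Var$ from the notation section. Equivalently, introducing the mean $\bar z=\tfrac1m\sum_i z_i$ and writing $z_i-z_j=(z_i-\bar z)-(z_j-\bar z)$ makes the cross terms vanish upon summation, since $\sum_i(z_i-\bar z)=0$, and exhibits $\Var$ directly; either route is a one-line verification.

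There is essentially no genuine obstacle here: the entire content is this classical variance identity, and the only point demanding attention is the bookkeeping of the two multiplicative constants, namely the factor $m$ from the free summation index and the factor $2$ arising because the symmetric sum $\sum_{i,j}$ counts each unordered pair $\{i,j\}$ twice. Carrying these through yields $f(y)=2(v-u+1)^2\,\Var(y_{[u,v]})$, matching the claimed expression up to the constant produced by this symmetric double counting; if the intended normalization counts each pair once (or scales $\Var$ accordingly) the stated formula follows verbatim.
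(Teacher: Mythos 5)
Your computation is correct, and it is essentially the same calculation as the paper's, phrased differently: the paper collapses $f(y)$ into the Laplacian quadratic form $y^T L_A y$, where $L_A=\diag(A\ones)-A$ has a single nonzero block $(v-u+1)\idm-\ones\ones^T$ on the indices $[u,v]$, and reads the variance off that block, whereas you expand the double sum over the block directly and invoke the classical pairwise-differences identity. The two routes are interchangeable; yours is more self-contained, the paper's connects to the Laplacian formulation used throughout Section~\ref{s:relax}.

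The point worth settling is the constant, and here you are right and the paper's statement is off by a factor of $2$. The identity the paper's proof invokes, $\sum_{ij}A_{ij}(y_i-y_j)^2=y^TL_Ay$, drops the usual one-half: for symmetric $A$ one has $\sum_{ij}A_{ij}(y_i-y_j)^2=2\,y^TL_Ay$, and since
\[
y^TL_Ay=(v-u+1)\sum_{i=u}^{v}y_i^2-\Bigl(\sum_{i=u}^{v}y_i\Bigr)^2=(v-u+1)^2\Var(y_{[u,v]}),
\]
the correct conclusion is $f(y)=2(v-u+1)^2\Var(y_{[u,v]})$, exactly what your double-counting bookkeeping produced. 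With the paper's own normalization of $\Var$ (the full ordered double sum and the biased variance), there is no alternative convention under which the stated constant comes out right, so you need not hedge with ``if the intended normalization counts each pair once.'' The discrepancy is benign for everything downstream: Lemma~\ref{lem:contig} and Propositions~\ref{prop:q-r} and~\ref{prop:q-r-converse} only use that minimizing $f$ over permutations is equivalent to minimizing the variance of the corresponding subvectors, a statement invariant under positive rescaling of the objective.
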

\begin{proof}
We can write $\sum_{ij} A_{ij}(y_i-y_j)^2=y^T L_A y$ where $L_A=\diag(A\ones)-A$ is the Laplacian of~$A$, which is a block matrix with a single nonzero block equal to $(v-u+1)\delta_{\{i=j\}}-1$ for $u\leq i,j\leq v$.
\end{proof}

This last lemma shows that solving the seriation problem~\eqref{eq:ser-pb} for CUT matrices amounts to finding a subset of $y$ of size $(u-v+1)$ with minimum variance. This is the key to all the results that follow. As illustrated in Figure~\ref{fig:var}, for CUT matrices and of course conic combinations of CUT matrices, monotonic sequences have lower variance than sequences where the ordering is broken and the results that follow make this explicit. We now show a simple technical lemma about the impact of switching two coefficients in $y$ on the objective of problem~\eqref{eq:ser-pb}, when $A$ is a CUT matrix.

\begin{figure}[thbp]
\begin{center}
\includegraphics[scale=0.33]{./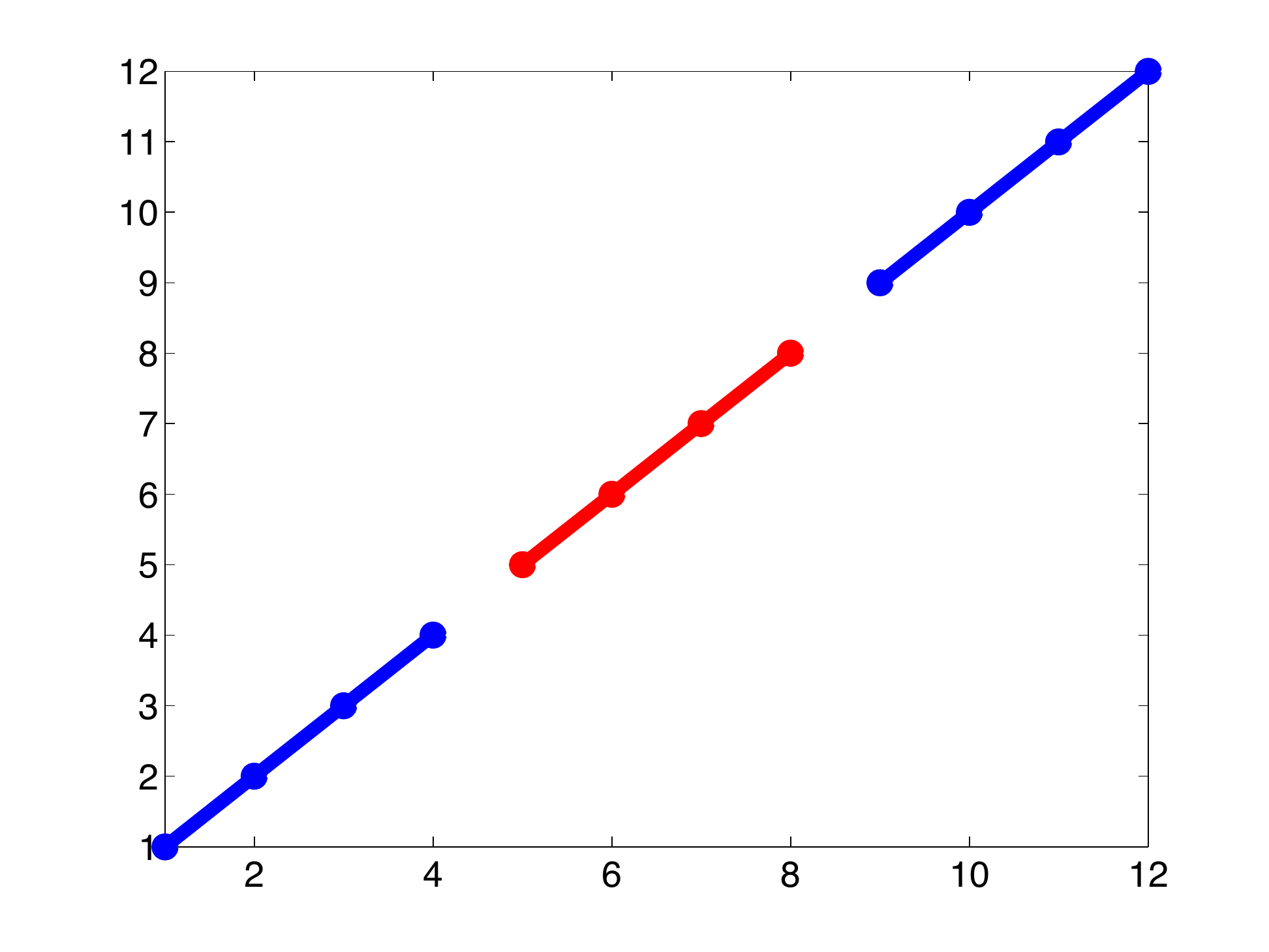}
\qquad
\includegraphics[scale=0.33]{./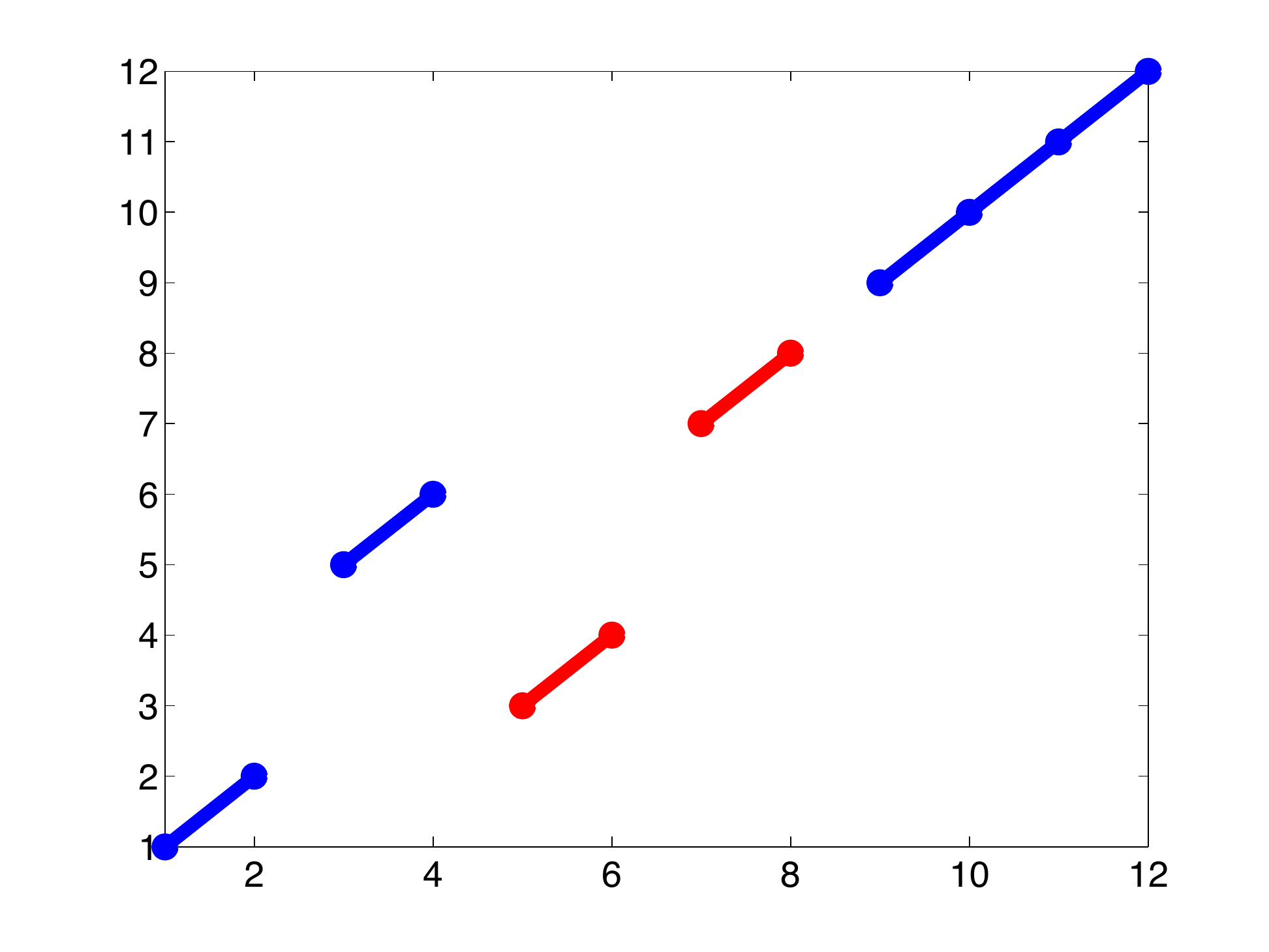}\\
\quad $\var(y_{[5,8]})=\mathbf{1.6}$ \qquad \qquad\qquad \qquad\qquad \qquad $\var(y_{[5,8]})=\mathbf{5.6}$
\caption{Objective values of 2-SUM problem~\eqref{eq:ser-pb} when $A=CUT(5,8)$ and $y_i=i$, $i=1,\ldots,12$. We plot the permuted values $y_{\pi(i)}$ against $i$, linking consecutive values of $y$ both inside and outside the interval $[5,8]$. The solution on the left, where the values of $y_{[5,8]}$ are consecutive, has $\var(y_{[5,8]})=1.6$ while $\var(y_{[5,8]})=5.6$ on the right, where there is a gap between $y_6$ and $y_7$. Minimizing the 2-SUM objective for CUT matrices, i.e. the variance of a subset of the coefficients of $y$, tends to pull the coefficients in this subset together. \label{fig:var}}
\end{center}
\end{figure}

\begin{lemma}\label{lem:switch}
Let $A\in\symm_n$, $y\in\reals^n$ and $f(\cdot)$ be the objective of problem~\eqref{eq:ser-pb}. Suppose we switch the values of $y_j$ and $y_{j+1}$ calling the new vector $z$, we have
\[
f(y)-f(z)=4\sum_{\substack{i=1\\i \neq j,\, i\neq j+1}}^n \left(\frac{y_j+y_{j+1}}{2}-y_i\right)(y_{j+1}-y_j)(A_{ij+1}-A_{ij}).
\]
\end{lemma}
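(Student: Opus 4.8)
The plan is to compute $f(y)-f(z)$ directly, exploiting the fact that swapping $y_j$ and $y_{j+1}$ affects only a small, identifiable subset of the terms in the double sum $f(y)=\sum_{p,q}A_{pq}(y_p-y_q)^2$. First I would classify the index pairs $(p,q)$ according to how many of them lie in $\{j,j+1\}$. If neither $p$ nor $q$ belongs to $\{j,j+1\}$, the term is untouched because $z$ agrees with $y$ off those two coordinates. Crucially, the two ``internal'' terms $(p,q)=(j,j+1)$ and $(j+1,j)$ are also unchanged, since $(z_j-z_{j+1})^2=(y_{j+1}-y_j)^2=(y_j-y_{j+1})^2$. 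Hence the entire difference comes from pairs in which exactly one index equals $j$ or $j+1$ and the other, call it $i$, is distinct from both.

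Next, for each such fixed $i\neq j,j+1$ I would collect its four contributions. Using the symmetry $A_{pq}=A_{qp}$, the pairs $(i,j)$ and $(j,i)$ together contribute $2A_{ij}(y_i-y_j)^2$ to $f(y)$, and likewise $2A_{i,j+1}(y_i-y_{j+1})^2$ for the index $j+1$. In $f(z)$ the swapped values $z_j=y_{j+1}$, $z_{j+1}=y_j$ turn these into $2A_{ij}(y_i-y_{j+1})^2+2A_{i,j+1}(y_i-y_j)^2$. Subtracting, the per-$i$ difference factors as
\[
\Delta_i = 2\bigl(A_{ij}-A_{i,j+1}\bigr)\Bigl[(y_i-y_j)^2-(y_i-y_{j+1})^2\Bigr].
\]

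The final step is the difference-of-squares factorization $(y_i-y_j)^2-(y_i-y_{j+1})^2=(y_{j+1}-y_j)(2y_i-y_j-y_{j+1})$, after which writing $2y_i-y_j-y_{j+1}=-2\bigl(\tfrac{y_j+y_{j+1}}{2}-y_i\bigr)$ and tracking the sign against $A_{ij}-A_{i,j+1}=-(A_{i,j+1}-A_{ij})$ produces exactly the summand $4\bigl(\tfrac{y_j+y_{j+1}}{2}-y_i\bigr)(y_{j+1}-y_j)(A_{i,j+1}-A_{ij})$. Summing $\Delta_i$ over all $i\notin\{j,j+1\}$ then gives the claimed identity.

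I do not expect any genuine obstacle here: the lemma is an algebraic identity and the argument is a finite computation. The only place demanding care is the bookkeeping — correctly verifying that the $(j,j+1)$ self-terms cancel, and that the off-diagonal symmetry of $A$ contributes the factor of $2$ in each summand — together with keeping the signs straight through the factorization. I would sanity-check the sign on a trivial instance, e.g.\ $A$ constant (where both sides vanish) or a single nonzero off-diagonal entry, before committing to the final expression.
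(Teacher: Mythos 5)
Your proposal is correct and takes essentially the same route as the paper's proof: both isolate the terms with exactly one index in $\{j,j+1\}$, use the symmetry of $A$ to collect a factor of $2$, and factor the difference of squares $(y_i-y_j)^2-(y_i-y_{j+1})^2=(y_{j+1}-y_j)(2y_i-y_j-y_{j+1})$ to reach the stated identity. The only cosmetic difference is that you spell out the cancellation of the internal $(j,j+1)$ term and the case analysis on index pairs, which the paper leaves implicit.
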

\begin{proof}
Because $A$ is symmetric, we have 
\BEAS
(f(y)-f(z))/2&=&\sum_{i \neq j,\, i\neq j+1} A_{ij}(y_i-y_j)^2+ \sum_{i \neq j,\, i\neq j+1} A_{ij+1}(y_i-y_{j+1})^2\\
&& - \sum_{i \neq j,\, i\neq j+1} A_{ij}(y_i-y_{j+1})^2- \sum_{i \neq j,\, i\neq j+1} A_{ij+1}(y_i-y_j)^2\\
&=& \sum_{i \neq j,\, i\neq j+1} 2A_{ij}(y_j-y_{j+1}) \left(\frac{y_j+y_{j+1}}{2}-y_i\right)\\
&& + \sum_{i \neq j,\, i\neq j+1} 2A_{ij+1}(y_{j+1}-y_j) \left(\frac{y_j+y_{j+1}}{2}-y_i\right),
\EEAS
which yields the desired result.
\end{proof}

The next lemma characterizes optimal solutions of problem~\eqref{eq:ser-pb} for CUT matrices and shows that they split the coefficients of $y$ in disjoint intervals. 

\begin{lemma}\label{lem:contig}
Suppose $A=CUT(u,v)$, and write $w=y_\pi$ the optimal solution to~\eqref{eq:ser-pb}. If we call $I=[u,v]$ and $I^c$ its complement, then
\[
w_j \notin [\min(w_I),\max(w_I)], \quad \mbox{for all $j\in I^c$},
\]
in other words, the coefficients in $w_I$ and $w_{I^c}$ belong to disjoint intervals.
\end{lemma}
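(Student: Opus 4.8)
The plan is to use Lemma~\ref{lem:var} to turn this statement about an optimal permutation into a purely combinatorial statement about which values of $y$ are placed in the positions $I=[u,v]$. Since $A=CUT(u,v)$, Lemma~\ref{lem:var} gives $f(w)=(v-u+1)^2\,\Var(w_I)$, which depends only on the multiset $S=\{w_i : i\in I\}$ of values sitting in the block, and not on their arrangement inside $I$ nor on the arrangement of the remaining values in $I^c$. Minimizing $f$ over permutations is therefore equivalent to choosing a sub-multiset $S$ of size $k=v-u+1$ of the coefficients of $y$ with minimum variance; the claim that $w_I$ and $w_{I^c}$ lie in disjoint intervals is exactly the assertion that an optimal $S$ forms an interval in the sorted list of values, i.e.\ that it never excludes a value lying strictly between two of its own elements.

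Next I would argue by contradiction using an exchange argument. Suppose $w$ is optimal yet some $j\in I^c$ satisfies $m<w_j<M$, where $m=\min(w_I)$ and $M=\max(w_I)$, so that in particular $m<M$, with $m$ and $M$ attained at positions $p,q\in I$. Set $a=w_j$ and consider two candidate transpositions: (a) exchange the value $a$ with the maximal value $M$, and (b) exchange $a$ with the minimal value $m$. Each produces a feasible vector whose block multiset differs from $S$ in a single element, so the change in $f$ is cheap to evaluate. Rather than reorder within $I$ and $I^c$ to make the swapped positions adjacent and then invoke Lemma~\ref{lem:switch}, it is cleanest to use the identity $\sum_{x,y\in S}(x-y)^2=2k^2\,\Var(S)$ and compute the effect of replacing one element $e\in S$ by $a$, which equals $2(e-a)\bigl(2k\mu-(k+1)e-(k-1)a\bigr)$, where $\mu$ is the mean of $S$. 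Applying this with $e=M$ and then $e=m$ yields the two increments, call them $\Delta_a$ and $\Delta_b$.

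The crux is to show that at least one of these swaps strictly decreases the variance, and this is where the case analysis must be organized carefully. Reading off signs, and using $a<M$ for (a) and $a>m$ for (b), one finds that $\Delta_a<0$ is equivalent to $2k\mu<(k+1)M+(k-1)a$ while $\Delta_b<0$ is equivalent to $2k\mu>(k+1)m+(k-1)a$; if both failed, combining the two reversed inequalities would force $(k+1)M\le(k+1)m$, i.e.\ $M\le m$, contradicting $m<M$. Hence one of the two swaps strictly lowers $f$, contradicting optimality, so no $w_j$ with $j\in I^c$ can lie strictly inside the open interval $(m,M)$. I would close by treating the boundary/tie cases $w_j=m$ or $w_j=M$, which arise only when an outside value coincides with an extreme block value; then the corresponding positions may be exchanged without changing $f$, so the disjoint-interval conclusion holds as stated (and is automatic when the coefficients of $y$ are distinct). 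The main obstacle is therefore not any single estimate but arranging the exchange argument so that the one inequality that may fail for swap (a) is exactly the one that must then hold for swap (b).
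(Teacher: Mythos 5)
Your proof is correct, and its overall strategy coincides with the paper's: a contradiction/exchange argument in which an outside value lying inside the block's range is swapped with the block maximum or minimum. The execution, however, differs in two genuine ways. First, the paper computes the effect of such a swap via Lemma~\ref{lem:switch}, which is stated for values in \emph{adjacent} positions and is then applied (with some license) to an arbitrary pair of positions; your route instead reduces everything to the multiset-variance formulation via Lemma~\ref{lem:var} and uses the self-contained replacement identity $\Delta = 2(e-a)\bigl(2k\mu-(k+1)e-(k-1)a\bigr)$ for $\sum_{x,y\in S}(x-y)^2=2k^2\Var(S)$, which I checked and is exact; this sidesteps the adjacency mismatch at the cost of redoing a small computation. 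Second, the case analyses are organized differently: the paper is constructive, comparing $w_j$ to the mean of the block values and choosing the swap accordingly (with the max when $w_j$ is at least the mean, with the min otherwise), whereas you argue non-constructively that if neither swap strictly decreases the objective then $(k+1)M\le(k+1)m$, contradicting $m<M$; both are sound, and yours has the merit that the two inequalities visibly dovetail without any discussion of where the mean sits. One caveat applies to both proofs but is stated too strongly in yours: when $y$ has repeated values, the closed-interval conclusion $w_j\notin[\min(w_I),\max(w_I)]$ can literally fail at an optimum (an outside value can tie with $\min(w_I)$, e.g.\ $y=(1,1,1,2)$ with $CUT(1,2)$), so your claim that the tie cases "hold as stated" is not right; the lemma really requires distinct coefficients — which is harmless here, since every application in the paper takes $y_i=ai+b$ with $a\neq 0$ — or an open-interval formulation.
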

\begin{proof}
Without loss of generality, we can assume that the coefficients of $w_I$ are sorted in increasing order. By contradiction, suppose that there is a $w_j$ such that $j\in I^c$ and $w_j \notin [w_u,w_v]$.  Suppose also that $w$ is larger than the mean of coefficients inside I, i.e.~$w_j \geq \sum_{i=u+1}^v w_i/(v-u)$. This, combined with our assumption that $w_j \leq w_v$ and Lemma~\ref{lem:switch} means that switching the values of~$w_j$ and $w_v$ will decrease the objective by
\[
4\sum_{i=u}^{v-1} \left(\frac{w_j+w_{v}}{2}-y_i\right)(w_v-w_j)
\]
which is positive by our assumptions on $w_j$ and the mean which contradicts optimality. A symmetric result holds if $w_j$ is smaller than the mean.
\end{proof}

This last lemma shows that when $A$ is a CUT matrix, then the monotonic vector $y_i=a i +b$, for $a,b \in \reals$ and $i=1,\ldots,n,$ is always an optimal solution to the 2-SUM problem~\eqref{eq:ser-pb}, since all subvectors of~$y$ of a given size have the same variance. This means that, when $y$ is a permutation of $y_i=a i +b$, all minimization problems~\eqref{eq:ser-pb} written on CUT matrices have {\em a common optimal solution}, where $y_\pi$ is monotonic. 

\subsection{Ordering P, Q \& R matrices}\label{ss:pqr-order}
Having showed that all 2-SUM problems~\eqref{eq:ser-pb} written on CUT matrices share a common monotonic solution, this section now shows how to decompose the square of P, Q and R-matrices as a sum of CUT matrices, then links the reordering of a matrix with that of its square $A\circ A^T$. We will first show a technical lemma proving that if $A$ is a Q-matrix, then $A\circ A^T$ is a conic combination of CUT matrices. The CUT decomposition of P and R-matrices will then naturally follow, since P-matrices are just $\{0,1\}$ Q-matrices, and R-matrices are symmetric Q-matrices. 

\begin{lemma}\label{lem:circ-cut}
Suppose $A\in\reals^{n \times m}$ is a Q-matrix, then $A\circ A^T$ is a conic combination of CUT matrices.
\end{lemma}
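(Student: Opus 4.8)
The plan is to exploit the fact that the circular product $A \circ A^T$ splits additively over the columns of $A$, and then to handle each column separately via a threshold (``layer-cake'') decomposition. Writing out the definition gives $(A \circ A^T)_{ij} = \sum_{k=1}^m \min\{A_{ik}, A_{jk}\}$, so that $A \circ A^T = \sum_{k=1}^m M^{(k)}$ with $M^{(k)}_{ij} = \min\{A_{ik}, A_{jk}\}$. Since a sum of conic combinations of CUT matrices is again a conic combination of CUT matrices, it suffices to prove the claim for a single (nonnegative) unimodal column, i.e.~to show that for a unimodal vector $a \in \reals^n$ with nonnegative entries the matrix $M$ with $M_{ij} = \min\{a_i, a_j\}$ is a conic combination of CUT matrices.

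First I would record the pointwise identity $\min\{a_i, a_j\} = \int_0^\infty \mathbf{1}\{a_i \geq t\}\,\mathbf{1}\{a_j \geq t\}\,dt$, which is valid precisely because $a_i, a_j \geq 0$. The key structural step is then to observe that unimodality of $a$ forces every superlevel set $S(t) = \{\,i : a_i \geq t\,\}$ to be a contiguous block of indices: since the entries increase to a maximum and then decrease, $S(t)$ is an interval $[u(t), v(t)]$. Consequently the integrand, viewed as a matrix in $(i,j)$, equals $\mathbf{1}\{i \in [u(t),v(t)]\}\,\mathbf{1}\{j \in [u(t),v(t)]\}$, which is by definition $CUT(u(t), v(t))$.

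Next I would convert the integral into a finite sum. The map $t \mapsto S(t)$ is piecewise constant and changes only at the distinct values of $a$; ordering these as $0 < t_1 < \cdots < t_p$ and setting $t_0 = 0$, the integral collapses to $M = \sum_{\ell=1}^p (t_\ell - t_{\ell-1})\, CUT(u_\ell, v_\ell)$, where $[u_\ell, v_\ell] = S(t_\ell)$. All weights $t_\ell - t_{\ell-1}$ are nonnegative, so this is the desired conic combination, and summing over the columns $k$ recovers $A \circ A^T$ as a conic combination of CUT matrices.

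The content of the argument lies entirely in the observation that unimodality makes each superlevel set an interval, which is exactly what makes every threshold slice a CUT matrix rather than a block-diagonal pattern with several blocks; this is the step I expect to require the most care to state cleanly. The nonnegativity of the entries is also essential, since it is what guarantees the layer-cake identity and hence nonnegative weights (for general similarity and Petrie matrices this holds). Everything else --- the columnwise split, the telescoping of the integral, and closure of conic combinations under addition --- is routine bookkeeping.
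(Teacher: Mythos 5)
Your proof is correct and takes essentially the same route as the paper: both arguments first split $A\circ A^T$ columnwise into the matrices $M^{(k)}_{ij}=\min\{A_{ik},A_{jk}\}$, and then decompose each unimodal column's min-matrix into CUT matrices supported on its superlevel-set intervals, with nonnegative weights equal to the gaps between consecutive distinct values. The paper reaches the identical conic combination by iteratively deflating the argmax block, $M^- = M - (\bar a - b)\,CUT(I_{\mathrm{min}},I_{\mathrm{max}})$, rather than via your layer-cake integral, and your explicit flagging of the nonnegativity of the entries (needed for the weights to be nonnegative, and left implicit in the paper) is a point of extra care, not a discrepancy.
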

\begin{proof}
Suppose, $a\in\reals^n$ is a unimodal vector, let us show that the matrix $M=a \circ a^T$ with coefficients $M_{ij}=\min\{a_i,a_j\}$ is a conic combination of CUT matrices. Let $I=\{j: j=\argmax_i a_i \}$, then $I$ is an index interval $[I_\mathrm{min},I_\mathrm{max}]$ because $a$ is unimodal. Call $\bar a=\max_i a_i$ and $b=\max_{i \in I^c} a_i$ (with $b=0$ when $I^c=\emptyset$), the deflated matrix
\[
M^-=M - (\bar a - b)~ CUT(I_\mathrm{min},I_\mathrm{max})
\]
can be written $M^-=a^- \circ (a^-)^T$ with
\[
a^-=a-(\bar a - b) v
\]
where $v_i=1$ iff $i\in I$. By construction $|\argmax M^-|>|I|$, i.e.~the size of $\argmax M$ increases by at least one, so this deflation procedure ends after at most $n$ iterations. This shows that $a \circ a^T$ is a conic combination of CUT matrices when $a$ is unimodal. Now, we have $(A\circ A^T)_{ij}=\sum_{k=1}^n w_k \min\{A_{ik},A_{jk}\}$, so $A\circ A^T$ is a sum of $n$ matrices of the form $\min\{A_{ik},A_{jk}\}$ where each column is unimodal, hence the desired result.
\end{proof}

This last result also shows that, when $A$ is a Q matrix, $A \circ A^T$ is a R-matrix as a sum of CUT matrices, which is illustrated in Figure~\ref{fig:r-mat}. We now recall the central result in \cite[Th.\,1]{Kend71} showing that for Q-matrices, reordering $A\circ A^T$ also reorders $A$.

\begin{theorem}\label{th:pq}
{\bf \cite[Th.\,1]{Kend71}} Suppose $A\in\reals^{n \times m}$ is pre-Q, then $\Pi A$ is a Q-matrix if and only if $\Pi (A\circ A^T) \Pi^T$ is a R-matrix.
\end{theorem}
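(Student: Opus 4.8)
The plan is to reduce the theorem to a statement about a single matrix by exploiting the fact that the circular square commutes with row permutations. First I would record the identity $\Pi(A\circ A^T)\Pi^T=(\Pi A)\circ(\Pi A)^T$, which follows directly from Definition~\ref{def:circ-prod}: writing $B=\Pi A$, one has $((\Pi A)\circ(\Pi A)^T)_{ij}=\sum_k\min\{A_{\pi(i)k},A_{\pi(j)k}\}=(A\circ A^T)_{\pi(i)\pi(j)}$, which is exactly the $(i,j)$ entry of $\Pi(A\circ A^T)\Pi^T$. Thanks to this identity, the two predicates ``$\Pi A$ is a Q-matrix'' and ``$\Pi(A\circ A^T)\Pi^T$ is an R-matrix'' are unchanged when $\Pi$ is absorbed into $A$, so it suffices to prove the equivalence ``$B$ is a Q-matrix $\iff$ $B\circ B^T$ is an R-matrix'' for every matrix $B=\Pi A$ obtained by permuting the rows of the pre-Q matrix $A$.

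The forward implication is then essentially in hand. If $B$ is a Q-matrix, Lemma~\ref{lem:circ-cut} writes $B\circ B^T$ as a conic combination of CUT matrices. Each $CUT(u,v)$ is itself an R-matrix in the sense of Definition~\ref{def:r-mat} (a direct check shows its entries are nonincreasing as one moves away from the diagonal), and the Robinson inequalities are preserved under nonnegative linear combinations, so $B\circ B^T$ is an R-matrix, as already noted informally after Lemma~\ref{lem:circ-cut}.

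The converse is the crux, and it is here that the pre-Q hypothesis becomes indispensable: one cannot read unimodality of the columns of $B$ off $B\circ B^T$ alone, since the circular product mixes all columns and a valley in one column may be masked by the others. Instead I would argue by comparison with a reference Q-ordering. Fix $\Pi_0$ with $B_0=\Pi_0 A$ a Q-matrix (it exists since $A$ is pre-Q), set $R_0=B_0\circ B_0^T$ (an R-matrix by the forward direction), and let $\sigma=\Pi\Pi_0^{-1}$, so that $B=\sigma B_0$ and $B\circ B^T=\sigma R_0\sigma^T$. The hypothesis is that both $R_0$ and $\sigma R_0\sigma^T$ are R-matrices, i.e.\ $\sigma$ is a symmetry of the Robinson structure of $R_0$. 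The key structural lemma to establish is that such symmetries are generated by the global reversal $i\mapsto n+1-i$ together with arbitrary permutations among maximal blocks of mutually equal indices of $R_0$. I would prove this by using the strict part of the Robinson monotonicity to pin the relative order of non-tied indices down to reversal, and by identifying the tied indices explicitly: since $R_{0,ij}=\sum_k\min\{(B_0)_{ik},(B_0)_{jk}\}\le\sum_k (B_0)_{ik}=R_{0,ii}$ with equality iff row $i$ of $B_0$ is dominated entrywise by row $j$, the equalities $R_{0,ij}=R_{0,ii}=R_{0,jj}$ hold exactly when rows $i$ and $j$ of $B_0$ coincide. Finally I would verify that each generator preserves Q-ness of $B_0$: reversing the rows of a Q-matrix keeps every column unimodal, and permuting within a tie-block merely permutes identical rows of $B_0$, so $B=\sigma B_0$ is again a Q-matrix.

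The main obstacle is the structural lemma characterizing the symmetry group of a Robinson matrix, i.e.\ the uniqueness of the Robinson reordering up to reversal and permutation of tied indices. The degenerate cases --- repeated entries, equal rows, and nested plateaus in the columns of $B_0$ (a PQ-tree type phenomenon) --- are what make this delicate, and treating them carefully rather than assuming a generic strictly decreasing band structure is the part of the argument requiring the most care.
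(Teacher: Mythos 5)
The paper itself does not prove this statement; it imports it verbatim from Kendall's 1971 paper, so there is no internal argument to compare against and your proposal must stand on its own. Two of its three steps do: the identity $\Pi(A\circ A^T)\Pi^T=(\Pi A)\circ(\Pi A)^T$ is correct, and the forward direction (Q-ness of $B$ gives R-ness of $B\circ B^T$ via Lemma~\ref{lem:circ-cut}, since CUT matrices are R-matrices and the Robinson inequalities survive conic combinations) is sound.

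The converse, however, rests on a structural lemma that is false. You claim the permutations $\sigma$ for which $\sigma R_0\sigma^T$ stays an R-matrix are generated by the global reversal together with permutations inside blocks of tied indices, where tied means $R_{0,ij}=R_{0,ii}=R_{0,jj}$ (equal rows of $B_0$). Consider
\[
B_0=\begin{pmatrix} 1&1&0&0&0&0\\ 0&1&1&0&0&0\\ 0&0&0&1&1&0\\ 0&0&0&0&1&1\end{pmatrix},
\qquad
R_0=B_0\circ B_0^T=\begin{pmatrix} 2&1&0&0\\ 1&2&0&0\\ 0&0&2&1\\ 0&0&1&2\end{pmatrix}.
\]
Here $B_0$ is a P-matrix (hence Q), $R_0$ is an R-matrix, and there are no ties since $R_{0,ii}=2>R_{0,ij}$ for $i\neq j$, so your generators produce only the identity and the global reversal. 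Yet the permutation $\sigma=(3,4,1,2)$ swapping the two diagonal blocks, and the permutation $\tau=(2,1,3,4)$ reversing only the first block, both satisfy $\sigma R_0\sigma^T=R_0$ and $\tau R_0\tau^T=R_0$, hence preserve R-ness. Your plan to pin down non-tied indices ``by the strict part of the Robinson monotonicity'' cannot work here: indices $1$ and $3$ are not tied, yet their relative order is completely undetermined by $R_0$. This is not a degenerate corner case needing extra care; whenever $R_0$ has block or modular structure (zero, or merely constant, values off the diagonal), the group of Robinson-preserving permutations acquires a recursive, PQ-tree-like structure with independent reversals and rearrangements of modules, and a proof along your lines would have to characterize that entire group and then verify that every element of it maps Q-matrices to Q-matrices --- which is essentially the theorem itself, restated. (In the example $\sigma B_0$ and $\tau B_0$ do remain Q-matrices, so the theorem survives; it is your lemma, and hence your proof, that breaks.) A correct argument must instead relate R-ness of $B\circ B^T$ to unimodality of the columns of $B$ directly, rather than through a classification of the Robinson symmetry group.
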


%

We use these last results to show that at least for some vectors~$y$, if $C$ is a Q-matrix then the 2-SUM problem~\eqref{eq:ser-pb} written on $A=C \circ C^T$ has a monotonic solution $y_\pi$.

\begin{proposition}\label{prop:q-r}
Suppose $C\in\reals^{n \times m}$ is a pre-Q matrix and $y_i=a i +b$ for $i=1,\ldots,n$ and $a,b\in\reals$ with $a\neq 0$. Let $A=C \circ C^T$, if $\Pi$ is such that $\Pi A \Pi^T$ is an R-matrix, then the corresponding permutation~$\pi$ solves the combinatorial minimization problem~\eqref{eq:ser-pb}.
\end{proposition}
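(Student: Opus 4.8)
The plan is to reduce the claim to the already-understood case of an R-matrix, solve it there using the CUT decomposition, and then transport the optimal permutation back to $A$. Since $C$ is pre-Q there is a permutation matrix $\Pi_0$, with associated permutation $\pi_0$, such that $\Pi_0 C$ is a Q-matrix; by Theorem~\ref{th:pq} this is equivalent to $R \triangleq \Pi_0 A \Pi_0^T = \Pi_0(C\circ C^T)\Pi_0^T$ being an R-matrix, which is exactly the hypothesis placed on $\Pi$. A one-line check on the circular product shows $(\Pi_0 C)\circ(\Pi_0 C)^T = \Pi_0(C\circ C^T)\Pi_0^T = R$, so applying Lemma~\ref{lem:circ-cut} to the Q-matrix $\Pi_0 C$ writes $R = \sum_t c_t\, CUT(u_t,v_t)$ as a conic combination of CUT matrices, with $c_t\geq 0$.

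Next I would show that the monotone vector $y$ itself solves~\eqref{eq:ser-pb} when the matrix is $R$. Writing $f_R$ for the objective of~\eqref{eq:ser-pb} on $R$, linearity in the matrix together with Lemma~\ref{lem:var} gives, for every permutation $\sigma$, $f_R(y_\sigma)=\sum_t c_t(v_t-u_t+1)^2\,\Var\big((y_\sigma)_{[u_t,v_t]}\big)$. By Lemma~\ref{lem:contig}, each summand is minimized precisely when the coefficients placed in the window $[u_t,v_t]$ form a contiguous block of $y$-values; and because $y_i=ai+b$ is equally spaced (with $a\neq 0$), every contiguous block of a fixed length has the same, hence minimal, variance. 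The identity arrangement $y_\sigma=y$ turns every window $[u_t,v_t]$ into a contiguous block of values at once, so it minimizes each term simultaneously and therefore minimizes the whole nonnegative combination $f_R$. Thus $\sigma=\mathrm{id}$, i.e. the monotone $y$, is optimal for~\eqref{eq:ser-pb} on $R$.

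It then remains to transport this back to $A$ via the identity $R_{ij}=A_{\pi_0(i)\pi_0(j)}$, equivalently $A_{ij}=R_{\pi_0^{-1}(i)\pi_0^{-1}(j)}$. Substituting the latter into $f(y_\pi)=\sum_{ij}A_{ij}(y_{\pi(i)}-y_{\pi(j)})^2$ and relabeling the summation indices yields $f(y_\pi)=f_R(y_{\pi\circ\pi_0})$ for every $\pi\in\mathcal{P}$. As $\pi$ ranges over $\mathcal{P}$ so does $\sigma=\pi\circ\pi_0$, whence $\min_\pi f(y_\pi)=\min_\sigma f_R(y_\sigma)=f_R(y)$, and the minimum is attained exactly when $y_{\pi\circ\pi_0}$ is monotone, i.e. for the permutation reordering $A$ into the R-matrix $R$. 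This is the permutation $\pi$ of the statement (up to the standard inversion hidden in the index relabeling), which therefore solves~\eqref{eq:ser-pb}.

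The delicate point is the simultaneity in the middle step: one must verify that a single arrangement minimizes all the CUT windows $\{[u_t,v_t]\}$ at the same time, and this is exactly where the hypothesis $y_i=ai+b$ enters in an essential way --- equal spacing forces the minimal variance of a window to depend only on its length, so the monotone arrangement is a common minimizer of every term rather than merely of each term separately. The only other care needed is the index bookkeeping in the transport step, tracking $\pi$ against $\pi_0^{-1}$ so that the permutation sorting $A$ to R-form is correctly identified with the 2-SUM minimizer.
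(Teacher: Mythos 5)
Your proof is correct and follows essentially the same route as the paper's: Lemma~\ref{lem:circ-cut} together with Theorem~\ref{th:pq} yields the CUT decomposition of the reordered matrix, and Lemmas~\ref{lem:var} and~\ref{lem:contig}, combined with the equal spacing of $y$ (so that all contiguous blocks of a given length have the same, minimal variance), show that the monotone arrangement minimizes every CUT term simultaneously, hence their conic combination. The only difference is one of explicitness: you spell out the transport identity $f(y_\pi)=f_R(y_{\pi\circ\pi_0})$ and flag the resulting inversion $\pi=\pi_0^{-1}$, a bookkeeping point that the paper's two-line proof glosses over.
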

\begin{proof}
If $C\in\reals^{n \times m}$ is pre-Q, then Lemma~\ref{lem:circ-cut} and Theorem~\ref{th:pq} show that there is a permutation $\Pi$ such that $\Pi (C \circ C^T) \Pi^T$ is a sum of CUT matrices (hence a R-matrix). Now all monotonic subsets of $y$ of a given length have the same variance, hence Lemmas~\ref{lem:var} and \ref{lem:contig} show that $\pi$ solves problem~\eqref{eq:ser-pb}.
\end{proof}

We now show that when the R-constraints are strict, the converse is also true, i.e. for matrices that are the square of Q-matrices, if $y_\pi$ solves the \mbox{2-SUM} problem~\eqref{eq:ser-pb}, then $\pi$ makes $A$ an R-matrix. In the next section, we will use this result to reorder \mbox{pre-R} matrices (with noise and additional structural constraints) by solving convex relaxations to the \mbox{2-SUM} problem.

\begin{proposition}\label{prop:q-r-converse}
Suppose $A$ is a pre-R matrix that can be written as $A=C \circ C^T$, where $C\in\reals^{n \times m}$ is a pre-Q matrix, $y_i=a i +b$ for $i=1,\ldots,n$ and $a,b\in\reals$ with $a\neq 0$. Suppose moreover that $A$ has strict R-constraints, i.e.~the rows/columns of $A$ are strictly unimodal after reordering.  If the permutation~$\pi$ solves the 2-SUM problem~\eqref{eq:ser-pb}, then the corresponding permutation matrix $\Pi$ is such that $\Pi A \Pi^T$ is an R-matrix.
\end{proposition}
\begin{proof}
We can assume that $A$ is a R-matrix without loss of generality. We will show that the identity is optimal for 2-SUM and that it is the unique such solution, hence solving 2-SUM solves seriation. Lemma~\ref{lem:circ-cut} shows that $A$ is a conic combination of CUT matrices. Moreover, by Proposition~\ref{prop:q-r} the identity matrix solves problem~\eqref{eq:ser-pb}. Following the proof of Proposition~\ref{prop:q-r}, the identity matrix is also optimal for each seriation subproblem on the CUT matrices of~$A$. 

Now remark that since the R-constraints are strict on the first column of $A$, there must be $n-2$ CUT matrices of the form $A_i=CUT(1,i)$ for $i=2,\ldots,n-1$ in the decomposition of $A$ (otherwise, there would be some index $k>1$ for which $A_{1k}=A_{1k+1}$ which would contradict our strict unimodal assumption).
Following the previous remarks, the identity matrix is optimal for all the seriation subproblems in $A_i$, which means that the variance of all the corresponding subvectors of $y_\pi$, i.e. $(y_{\pi(1)},y_{\pi(2)})$, $(y_{\pi(1)},y_{\pi(2)},y_{\pi(3)})$,\ldots, $(y_{\pi(1)},\ldots,y_{\pi(n-1)})$ must be minimized. Since these subvectors of $y_\pi$ are monotonically embedded, up to a permutation of $y_{\pi(1)}$ and $y_{\pi(2)}$, Lemma~\ref{lem:contig} shows that this can only be achieved for contiguous $y_{\pi(i)}$, that is for $\pi$ equal to the identity or the reverse permutation. Indeed, to minimize the variance of $(y_{\pi(1)},\ldots,y_{\pi(n-1)})$, we have to choose $\pi(n)=n$ or $\pi(n)=1$. Then to minimize the variance of $(y_{\pi(1)},\ldots,y_{\pi(n-2)})$, we have to choose respectively $\pi(n-1)=n-1$ or $\pi(n-1)=2$. Thus we get by induction respectively $\pi(i)=i$ or $\pi(i)=n-i+1$ for $i=3,\ldots,n$. Finally, there are only two permutations left for $y_{\pi(1)}$ and $y_{\pi(2)}$. Since $A_{31}<A_{32}$, we have to choose $(y_{\pi(3)}-y_{\pi(1)})^2>(y_{\pi(3)}-y_{\pi(2)})^2$, and the remaining ambiguity on the order of $y_{\pi(1)}$ and $y_{\pi(2)}$ is removed.
\end{proof}

These results shows that if $A$ is pre-R and can be written $A=C \circ C^T$ with $C$ pre-Q, then the permutation that makes $A$ an R-matrix also solves the 2-SUM problem~\eqref{eq:ser-pb}. Conversely, when $A$ is pre-R (strictly), the permutation that solves~\eqref{eq:ser-pb} reorders $A$  as a R-matrix. Since \cite{Atki98} show that sorting the Fiedler vector also orders A as an R-matrix, Proposition~\ref{prop:q-r} gives a polynomial time solution to the 2-SUM problem~\eqref{eq:ser-pb} when $A$ is pre-R with $A=C \circ C^T$ for some pre-Q matrix $C$. Note that the strict monotonicity constraints on the R-matrix can be somewhat relaxed (we only need one strictly monotonic column plus two more constraints), but requiring strict monotonicity everywhere simplifies the argument.

\section{Convex relaxations}\label{s:relax}
In the sections that follow, we will use the combinatorial results derived above to produce convex relaxations of optimization problems written over the set of permutation matrices. We mostly focus on the 2-SUM problem in~\eqref{eq:ser-pb}, however many of the results below can be directly adapted to other objective functions. We detail several convex approximations, some new, some taken from the computer science literature, ranked by increasing numerical complexity. Without loss of generality, we always assume that the weight matrix $A$ is nonnegative (if $A$ has negative entries, it can be shifted to become nonnegative, with no impact on the permutation problem). The nonnegativity assumption is in any case natural since $A$ represents a similarity matrix in the seriation problem.

\subsection{Spectral ordering}\label{ss:spectral}
We first recall classical definitions from spectral clustering and briefly survey the spectral ordering results in \citep{Atki98} in the noiseless setting.

\begin{definition}\label{def:fiedler}
The Fiedler value of a symmetric, nonnegative matrix $A$ is the smallest non-zero eigenvalue of its Laplacian $L_A=\diag(A\ones)-A$. The corresponding eigenvector is called Fiedler vector and is the optimal solution to
\BEQ\label{eq:fiedler}
\BA{ll}
\mbox{$\mathrm{minimize}$} & y^T  L_A  y\\
\mbox{$\mathrm{subject \ to}$} & y^T\ones=0, \|y\|_2=1.
\EA\EEQ
in the variable $x\in\reals^n$.
\end{definition}

We now recall the main result from \citep{Atki98} which shows how to reorder pre-R matrices in a noise free setting.  

\begin{proposition}
{\bf \cite[Th.\,3.3]{Atki98}} Suppose $A\in\symm_n$ is a pre-R-matrix, with a simple Fiedler value whose Fiedler vector $v$ has no repeated values. Suppose that $\Pi$ is a permutation matrix such that the permuted Fielder vector $\Pi v$ is strictly monotonic, then $\Pi  A \Pi^T$ is an R-matrix.
\end{proposition}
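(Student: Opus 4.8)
The plan is to reduce the statement to a single analytic fact — that the Fiedler vector of a genuine R-matrix is strictly monotone — and then close the gap with a short uniqueness argument.

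\textbf{Reduction.} Since $A$ is pre-R, fix a permutation matrix $Q$ with $B:=QAQ^T$ an R-matrix. The Laplacian transforms as $L_B=QL_AQ^T$, so if $v$ is the Fiedler vector of $L_A$ then $Qv$ satisfies $L_B(Qv)=\lambda(Qv)$, is orthogonal to $\ones$, and has unit norm; hence $Qv$ is the Fiedler vector of $B$, with the same (simple) Fiedler value and the same multiset of (distinct) entries as $v$. Thus it suffices to prove the following: \emph{the Fiedler vector of an R-matrix with simple Fiedler value and distinct entries is strictly monotone.} Granting this, $Qv$ is strictly monotone.

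\textbf{Uniqueness of the sorting permutation.} Both $\Pi v$ (by hypothesis) and $Qv$ are strictly monotone reorderings of the same vector $v$ with distinct entries. For distinct entries there is exactly one permutation sorting $v$ increasingly and exactly one sorting it decreasingly, and these two differ by the reversal matrix $P$ (with $P_{i,n+1-i}=1$). Hence $\Pi\in\{Q,PQ\}$, so $\Pi A\Pi^T\in\{B,PBP^T\}$. Since $(PBP^T)_{ij}=B_{n+1-i,\,n+1-j}$ depends on $i,j$ only through $|i-j|$ in exactly the same way as $B$, reversal preserves the Robinson property; in either case $\Pi A\Pi^T$ is an R-matrix, which is the claim.

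\textbf{The core (and main obstacle).} Everything rests on the monotonicity of the Fiedler vector of an R-matrix, which is the genuine content of the theorem. The route I would take is an exchange argument: $v$ minimizes $y^TL_Ay=\sum_{ij}A_{ij}(y_i-y_j)^2$ over $\{\,\ones^Ty=0,\ \|y\|_2=1\,\}$, a set invariant under permuting the entries of $y$. If $v$ were not monotone, one would like to remove an inversion by an adjacent transposition and strictly decrease the objective, contradicting optimality. Lemma~\ref{lem:switch} gives the exact change under swapping $v_j$ and $v_{j+1}$, and the Robinson structure controls the matrix factor: $A_{i,j+1}-A_{ij}\le 0$ for $i<j$ and $A_{i,j+1}-A_{ij}\ge 0$ for $i>j+1$, since entries decay with distance from the diagonal. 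The difficulty — and the reason the statement is nontrivial — is that the scalar factor $\big(\tfrac{v_j+v_{j+1}}{2}-v_i\big)$ is not sign-definite across all $i$, so a naive bubble-sort on a single inversion need not be objective-improving; controlling these competing signs is exactly where the oscillation / Sturm--Liouville structure alluded to in the introduction must enter. Once weak monotonicity is established this way, the simplicity of the Fiedler value forbids a distinct second minimizer produced by a value-preserving swap, and the distinctness of the entries of $v$ rules out the remaining equalities, upgrading the conclusion to strict monotonicity.
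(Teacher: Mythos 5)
There is a genuine gap, and you have in fact named it yourself. The paper does not prove this proposition at all: it is imported verbatim as \cite[Th.\,3.3]{Atki98}, and the analytic heart of that theorem --- that the Fiedler vector of an R-matrix is monotone (this is \cite[Th.\,3.2]{Atki98}, which the paper also invokes without proof just before its Corollary) --- is exactly the step your proposal leaves open. Your ``Reduction'' and ``Uniqueness'' paragraphs are correct: conjugation carries $L_A$ to $L_B=QL_AQ^T$, simplicity makes $Qv$ the (essentially unique) Fiedler vector of $B$, distinctness of the entries pins $\Pi$ down to $Q$ or $PQ$ with $P$ the reversal, and the Robinson property is indeed preserved by simultaneous reversal of rows and columns (though your phrase ``depends on $i,j$ only through $|i-j|$'' is inaccurate --- an R-matrix need not be Toeplitz; what is true is that the monotone-away-from-the-diagonal conditions are invariant under $(i,j)\mapsto(n{+}1{-}i,n{+}1{-}j)$). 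But all of this is routine bookkeeping around the core claim, and the core claim is not established.

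Moreover, the route you sketch for the core cannot be completed as stated. Lemma~\ref{lem:switch} and the exchange arguments in Section~\ref{ss:cut} of the paper are deployed there for the combinatorial 2-SUM problem on CUT matrices, where the coefficient structure makes the sign bookkeeping work; for the spectral problem~\eqref{eq:fiedler} the factor $\bigl(\tfrac{v_j+v_{j+1}}{2}-v_i\bigr)$ is, as you observe, not sign-definite, so removing a single inversion by an adjacent transposition need not decrease the Rayleigh quotient, and no amount of appealing to simplicity afterwards repairs a non-improving swap. The actual proof in \citet{Atki98} is not a local exchange argument at all: it is a global, structural analysis of eigenvectors of R-matrices (in the spirit of oscillation/Sturm--Liouville theory alluded to in the introduction), showing that sublevel index sets of a Fiedler vector must be contiguous. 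So your proposal should be read as a correct reduction of the proposition to \cite[Th.\,3.2]{Atki98} plus an honest acknowledgment that this theorem is assumed, not proved; as a self-contained proof it is incomplete.
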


We now extend the result of Proposition~\ref{prop:q-r} to the case where the weights $y$ are given by the Fiedler vector. 

\begin{proposition}\label{prop:fiedlerExtension}
Suppose $A\in\symm^{n \times n}$ is a R-matrix and $y$ is its Fiedler vector. Then the identity permutation solves the 2-SUM problem~\eqref{eq:ser-pb}.
\end{proposition}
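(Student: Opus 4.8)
The plan is to recognize the 2-SUM objective as a Laplacian quadratic form and then invoke the variational (Rayleigh--Ritz) characterization of the Fiedler vector directly, so that the identity permutation is optimal essentially by construction. First I would rewrite, for any permutation $\pi$ with matrix $\Pi$,
\[
f(y_\pi)=\sum_{i,j=1}^n A_{ij}(y_{\pi(i)}-y_{\pi(j)})^2=(\Pi y)^T L_A (\Pi y),
\]
where $L_A=\diag(A\ones)-A$ is the Laplacian of $A$, exactly as in the proof of Lemma~\ref{lem:var} and matching the objective of the Fiedler problem~\eqref{eq:fiedler}. In particular, the identity permutation attains $f(y)=y^T L_A y$, which is precisely the Fiedler objective evaluated at the Fiedler vector.

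Next I would observe that a permutation matrix preserves exactly the two constraints defining~\eqref{eq:fiedler}. Since $\Pi^T\ones=\ones$ we get $(\Pi y)^T\ones=y^T\Pi^T\ones=y^T\ones=0$, and since $\Pi^T\Pi=\idm$ we get $\|\Pi y\|_2=\|y\|_2=1$. Hence for every $\pi\in\mathcal{P}$ the vector $\Pi y$ is feasible for~\eqref{eq:fiedler}. The conclusion then follows immediately from the optimality of the Fiedler vector: by Rayleigh--Ritz the minimum of $z^T L_A z$ over $\{z:\,z^T\ones=0,\,\|z\|_2=1\}$ equals the Fiedler value and is attained at $y$, so
\[
f(y_\pi)=(\Pi y)^T L_A (\Pi y)\geq y^T L_A y=f(y)\qquad\text{for all }\pi\in\mathcal{P},
\]
and the identity permutation solves~\eqref{eq:ser-pb}.

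The one point requiring care, and the main (mild) obstacle, is justifying that the minimum of the Rayleigh quotient over $\ones^{\perp}$ really equals the Fiedler value attained at $y$ rather than $0$; this is guaranteed precisely when $0$ is a simple eigenvalue of $L_A$ with eigenvector $\ones$, i.e.\ when the similarity graph of $A$ is connected. This is where the R-matrix hypothesis enters (together with the implicit assumption, as in the preceding proposition of \cite{Atki98}, of a simple Fiedler value). Apart from this spectral bookkeeping the argument is immediate and, in contrast with Proposition~\ref{prop:q-r}, uses neither the CUT decomposition nor any linearity of the weights: choosing $y$ to be the Fiedler vector of $A$ makes the identity optimal automatically, since permutations merely move $y$ around the feasible sphere of~\eqref{eq:fiedler} on which $y$ already minimizes $z^T L_A z$.
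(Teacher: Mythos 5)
Your proof is correct and follows essentially the same route as the paper's: both rewrite the 2-SUM objective as $(\Pi y)^T L_A (\Pi y)$, observe that every $\Pi y$ remains feasible for the Fiedler problem~\eqref{eq:fiedler}, and conclude optimality of the identity from the variational characterization of the Fiedler vector. Your closing caveat about connectivity is fair but is not actually resolved by the R-matrix hypothesis (R-matrices can have disconnected support); it is the same implicit assumption, built into Definition~\ref{def:fiedler}, on which the paper's own proof rests.
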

\begin{proof} The combinatorial problem~\eqref{eq:ser-pb} can be rewritten 
\[
\BA{ll}
\mbox{minimize} & y^T \Pi^T L_A  \Pi y\\
\mbox{subject to} & \Pi\in\mathcal{P},
\EA
\]
which is also equivalent to
\[
\BA{ll}
\mbox{minimize} & z^T L_A  z\\
\mbox{subject to} & z^T\ones=0, \|z\|_2=1, z=\Pi y, \Pi\in\mathcal{P},
\EA
\]
since $y$ is the Fiedler vector of $A$. By dropping the constraints $ z=\Pi y, \Pi\in\mathcal{P}$, we can relax the last problem into~\eqref{eq:fiedler}, whose solution is the Fiedler vector of $A$. Note that the optimal value of problem~\eqref{eq:ser-pb} is thus an upper bound on that of its relaxation~\eqref{eq:fiedler}, i.e.~the Fiedler value of $A$. This upper bound is attained by the Fiedler vector, i.e. the optimum of~\eqref{eq:fiedler}, therefore the identity matrix is an optimal solution to~\eqref{eq:ser-pb}. 
\end{proof}

Using the fact that the Fiedler vector of a R-matrix is monotonic \citep[Th.\,3.2]{Atki98}, the next corollary immediately follows.

\begin{corollary}
If $A$ is a pre-R matrix such that $\Pi^T A \Pi$ is a R-matrix, then $\pi$ is an optimal solution to problem~\eqref{eq:ser-pb} when $y$ is the Fiedler vector of $A$ sorted in increasing order.
\end{corollary}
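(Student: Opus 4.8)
The plan is to reduce the statement to Proposition~\ref{prop:fiedlerExtension} by conjugating with the reordering permutation $\Pi$. Write $B = \Pi^T A \Pi$, which is an R-matrix by hypothesis, and observe that the Laplacians satisfy $L_A = \Pi L_B \Pi^T$, so that $A$ and $B$ are permutation-similar and in particular share the same Laplacian spectrum. If $v$ denotes the Fiedler vector of $A$ (with Fiedler value $\lambda$), then $\Pi^T v$ is a Fiedler vector of $B$, since $L_B(\Pi^T v) = \Pi^T L_A \Pi \Pi^T v = \Pi^T L_A v = \lambda\, \Pi^T v$.

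First I would fix the orientation of the Fiedler vector. By \cite[Th.\,3.2]{Atki98}, the Fiedler vector $\Pi^T v$ of the R-matrix $B$ is monotonic; using the sign freedom in the eigenvector together with the fact that the objective of~\eqref{eq:ser-pb} is invariant under $y \mapsto -y$, I may assume it is increasing. Since $\Pi^T v$ and $v$ have the same multiset of entries, this monotonic vector is exactly the Fiedler vector of $A$ sorted in increasing order, i.e. it equals $y$. Hence $y$ is precisely the Fiedler vector of the R-matrix $B$.

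Next, Proposition~\ref{prop:fiedlerExtension} applies directly to $B$: the identity permutation solves the 2-SUM problem~\eqref{eq:ser-pb} written on $B$ with weights $y$, that is $y^T L_B y \leq y_\sigma^T L_B y_\sigma$ for every $\sigma\in\mathcal{P}$. It then remains to transfer this optimality to $A$. Using $L_A = \Pi L_B \Pi^T$ and $y_\pi = \Pi y$, a direct computation gives $y_\pi^T L_A y_\pi = y^T (\Pi^T\Pi) L_B (\Pi^T\Pi) y = y^T L_B y$. Moreover, for any $\sigma\in\mathcal{P}$ one has $y_\sigma^T L_B y_\sigma = (\Pi y_\sigma)^T L_A (\Pi y_\sigma) = y_{\sigma\circ\pi}^T L_A y_{\sigma\circ\pi}$, so that an arbitrary $\tau\in\mathcal{P}$ can be matched to $\sigma = \tau\circ\pi^{-1}$, yielding $y_\tau^T L_A y_\tau = y_\sigma^T L_B y_\sigma \geq y^T L_B y = y_\pi^T L_A y_\pi$. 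Thus $\pi$ attains the minimum of~\eqref{eq:ser-pb} on $A$.

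The genuinely routine part is the bookkeeping of the permutation composition in the identity $y_\sigma^T L_B y_\sigma = y_{\sigma\circ\pi}^T L_A y_{\sigma\circ\pi}$, which is where the $\Pi$ versus $\Pi^T$ conventions must be tracked carefully. The only real subtlety, and the step I expect to need the most care, is the orientation normalization of the Fiedler vector: I must ensure that the monotonic Fiedler vector of $B$ coincides with the \emph{sorted in increasing order} Fiedler vector of $A$, rather than its reversal, which is exactly what the sign freedom and the $y\mapsto -y$ invariance of the objective are used to guarantee.
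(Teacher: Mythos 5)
Your proof is correct and takes essentially the same route as the paper, whose one-line argument likewise combines \cite[Th.\,3.2]{Atki98} (the Fiedler vector of the R-matrix $\Pi^T A \Pi$ is monotonic, hence coincides with the sorted Fiedler vector of $A$ once signs are fixed) with Proposition~\ref{prop:fiedlerExtension}; the conjugation bookkeeping you spell out is exactly what the paper leaves as ``immediate.'' One small remark: the orientation step is really justified by the sign freedom alone --- negating the Fiedler vector of $A$ replaces the sorted vector $y$ by its negated \emph{reversal}, not by $-y$, and correspondingly exchanges the optimal permutation $\pi$ with its reversal --- so the $y\mapsto-y$ invariance is not the operative fact, but since you fix the sign so that $\Pi^T v$ is increasing (which is the reading the corollary intends), your argument stands.
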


The results in \citep{Atki98} thus provide a polynomial time solution to the R-matrix ordering problem in a noise free setting (extremal eigenvalues of dense matrices can be computed by randomized polynomial time algorithms with complexity $O(n^2\log n)$ \citep{Kucz92}). While \cite{Atki98} also show how to handle cases where the Fiedler vector is degenerate, these scenarios are highly unlikely to arise in settings where observations on $A$ are noisy and we refer the reader to \cite[\S4]{Atki98} for details.

\subsection{QP relaxation}\label{ss:qp}
In most applications, $A$ is typically noisy and the pre-R assumption no longer holds.  The spectral solution is stable when the magnitude of the noise remains within the spectral gap (i.e., in a perturbative regime \citep{Stew90}). Beyond that, while the Fiedler vector of $A$ can still be used as a heuristic to find an approximate solution to~\eqref{eq:ser-pb}, there is no guarantee that it will be optimal.

The results in Section~\ref{s:ser} made the connection between the spectral ordering in \citep{Atki98} and the 2-SUM problem~\eqref{eq:ser-pb}. In what follows, we will use convex relaxations to~\eqref{eq:ser-pb} to solve matrix ordering problems in a noisy setting. We also show in \S\ref{ss:struct} how to incorporate a priori knowledge on the true ordering in the formulation of the optimization problem to solve semi-supervised seriation problems. Numerical experiments in Section~\ref{s:apps} show that semi-supervised seriation solutions are sometimes significantly more robust to noise than the spectral solutions ordered from the Fiedler vector.

\subsubsection{Permutations and doubly stochastic matrices}
We write $\mathcal{D}_n$ the set of doubly stochastic matrices, i.e. $\mathcal{D}_n = \{ X \in \mathbb{R}^{n\times n} :  X\geqslant 0, X\mathbf{1}=\mathbf{1}, X^T\mathbf{1}=\mathbf{1} \}$. Note that $\mathcal{D}_n$ is convex and polyhedral. Classical results show that the set of doubly stochastic matrices is the convex hull of the set of permutation matrices.
We also have $\mathcal{P}=\mathcal{D}\cap\mathcal{O}$, i.e.~a matrix is a permutation matrix if and only if it is both doubly stochastic and orthogonal. The fact that $L_A \succeq 0$ means that we can directly write a convex relaxation to the combinatorial problem~\eqref{eq:ser-pb} by replacing $\mathcal{P}$ with its convex hull $\mathcal{D}_n$, to get
\BEQ\label{eq:relaxedPb1}
\BA{ll}
\mbox{minimize} & g^T \Pi^T L_A \Pi g\\
\mbox{subject to} & \Pi \in \mathcal{D}_n,
\EA\EEQ
where $g=(1,\ldots,n)$, in the permutation matrix variable $\Pi\in\mathcal{P}$. By symmetry, if a vector $\Pi y$ minimizes~\eqref{eq:relaxedPb1}, then the reverse vector also minimizes~\eqref{eq:relaxedPb1}. This often has a significant negative impact on the quality of the relaxation, and we add the linear constraint $e_1^T \Pi g + 1 \leq e_n^T \Pi g$ to break symmetries, which means that we always pick solutions where the first element comes before the last one.
Because the Laplacian $L_A$ is positive semidefinite, problem~\eqref{eq:relaxedPb1} is a convex quadratic program in the variable $\Pi\in \mathbb{R}^{n\times n}$ and can be solved efficiently. To produce approximate solutions to problem~\eqref{eq:ser-pb}, we then generate permutations from the doubly stochastic optimal solution to the relaxation in~\eqref{eq:relaxedPb1} (we will describe an efficient procedure to do so in \S\ref{sec:sampling-proc}). 

The results of Section~\ref{s:ser} show that the optimal solution to~\eqref{eq:ser-pb} also solves the seriation problem in the noiseless setting when the matrix $A$ is of the form $C \circ C^T$ with $C$ a Q-matrix and $y$ is an affine transform of the vector $\left(1,\ldots,n\right)$. These results also hold empirically for small perturbations of the vector $y$ and to improve robustness to noisy observations of $A$, we average several values of the objective of~\eqref{eq:relaxedPb1} over these perturbations, solving
\BEQ\label{eq:relaxedPb2}
\BA{ll}
\mbox{minimize} & \Tr (Y^T\Pi^T L_A\Pi Y)/p \\
\mbox{subject to} &  e_1^T \Pi g + 1 \leq e_n^T \Pi g,\\
& \Pi \ones=\ones,\, \Pi^T\ones=\ones, \, \Pi \geq 0,
\EA\EEQ
in the variable $\Pi \in\reals^{n\times n}$, where $Y \in \reals^{n\times p}$ is a matrix whose columns are small perturbations of the vector $g=(1,\ldots,n)^T$. Solving~\eqref{eq:relaxedPb2} is roughly $p$ times faster than individually solving $p$ versions of~\eqref{eq:relaxedPb1}.

\subsubsection{Regularized QP relaxation} \label{sss:regularized}
In the previous section, we have relaxed the combinatorial problem~\eqref{eq:ser-pb} by relaxing the set of permutation matrices into the set of doubly stochastic matrices. As the set of permutation matrices $\mathcal{P}$ is the intersection of the set of doubly stochastic matrices $\mathcal{D}$ and the set of orthogonal matrices $\mathcal{O}$, i.e.~$\mathcal{P}=\mathcal{D}\cap\mathcal{O}$ we can add a penalty to the objective of the convex relaxed problem~\eqref{eq:relaxedPb2} to force the solution to get closer to the set of orthogonal matrices. Since a doubly stochastic matrix of Frobenius norm $\sqrt{n}$ is necessarily orthogonal, we would ideally like to solve

\BEQ\label{eq:relaxedPbRegIdeal}
\BA{ll}
\mbox{minimize} &  \frac{1}{p} \Tr(Y^T\Pi^T L_A\Pi Y) - \frac{\mu}{p}\| \Pi\|_F^2\\
\mbox{subject to} &  e_1^T \Pi g + 1 \leq e_n^T \Pi g,\\
& \Pi \ones=\ones,\, \Pi^T\ones=\ones, \, \Pi \geq 0,
\EA\EEQ
with $\mu$ large enough to guarantee that the global solution is indeed a permutation. However, this problem is not convex for any $\mu >0$ since its Hessian is not positive semi-definite. Note that the objective of~\eqref{eq:relaxedPb2} can be rewritten as $\mathrm{Vec}(\Pi)^T(YY^T\otimes L_A)\mathrm{Vec}(\Pi)/p$ so the Hessian here is $YY^T\otimes L_A - \mu I \otimes I$ and is never positive semidefinite when $\mu>0$ since the first eigenvalue of $L_A$ is always zero. Instead, we propose a slightly modified version of~\eqref{eq:relaxedPbRegIdeal}, which has the same objective function up to a constant, and is convex for some values of $\mu$. Recall that the Laplacian matrix $L_A$ is always positive semidefinite with at least one eigenvalue equal to zero corresponding to the eigenvector $\ones/\sqrt{n}$ (strictly one if the graph is connected) and let $P=\idm - \frac{1}{n}\mathbf{1}\mathbf{1}^T$. 

\begin{proposition}\label{prop:opt}
The optimization problem
\BEQ \label{eq:relaxedPbReg1}
\BA{ll}
\mbox{$\mathrm{minimize}$} &  \frac{1}{p}\Tr(Y^T\Pi^T L_A\Pi Y) - \frac{\mu}{p}\|P\Pi\|_F^2\\
\mbox{$\mathrm{subject \ to}$} &  e_1^T \Pi g + 1 \leq e_n^T \Pi g,\\
& \Pi \ones=\ones,\, \Pi^T\ones=\ones, \, \Pi \geq 0,
\EA\EEQ
is equivalent to problem~\eqref {eq:relaxedPbRegIdeal}, their objectives differ by a constant. Furthermore, when $\mu \leq \lambda_2(L_A) \lambda_1(YY^T)$, this problem is convex.
\end{proposition}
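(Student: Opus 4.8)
The plan is to handle the two assertions in turn. For the claimed equivalence, I would use that $P=\idm-\tfrac{1}{n}\ones\ones^T$ is the orthogonal projector onto $\ones^\perp$, so that $P^TP=P$ and $\idm-P=\tfrac{1}{n}\ones\ones^T$. Since every feasible $\Pi$ is doubly stochastic it satisfies $\Pi^T\ones=\ones$, whence
\[
\|\Pi\|_F^2-\|P\Pi\|_F^2=\Tr\!\big(\Pi^T(\idm-P)\Pi\big)=\tfrac{1}{n}\,\|\Pi^T\ones\|_2^2=\tfrac{1}{n}\,\|\ones\|_2^2=1 .
\]
Hence on the common (polyhedral) feasible set the objective of~\eqref{eq:relaxedPbReg1} differs from that of~\eqref{eq:relaxedPbRegIdeal} by the additive constant $\mu/p$, so the two problems share the same minimizers. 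This settles the first claim.

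For convexity I would vectorize the objective. Using $\Tr(Y^T\Pi^TL_A\Pi Y)=\vect{\Pi}^T(YY^T\otimes L_A)\vect{\Pi}$ together with $\|P\Pi\|_F^2=\vect{\Pi}^T(\idm\otimes P)\vect{\Pi}$, the objective of~\eqref{eq:relaxedPbReg1} becomes $\tfrac{1}{p}\vect{\Pi}^TH\vect{\Pi}$ with $H=YY^T\otimes L_A-\mu\,\idm\otimes P$. As the constraints are linear, convexity of the program is equivalent to $H\succeq0$. The key structural fact I would exploit is that $L_A$ and $P$ commute: since $L_A\ones=0$ we have $L_AP=PL_A=L_A$, while $P$ fixes $\ones^\perp$ and annihilates $\ones$. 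Thus $L_A$ and $P$ admit a common orthonormal eigenbasis $u_1=\ones/\sqrt{n},u_2,\dots,u_n$, with $L_Au_j=\lambda_j(L_A)u_j$, $Pu_1=0$ and $Pu_j=u_j$ for $j\ge2$.

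Let $w_1,\dots,w_n$ be an orthonormal eigenbasis of $YY^T$ with eigenvalues $\lambda_i(YY^T)$. Then $\{w_i\otimes u_j\}$ is an orthonormal basis simultaneously diagonalizing $YY^T\otimes L_A$ and $\idm\otimes P$, and hence $H$, with eigenvalue $\lambda_i(YY^T)\lambda_j(L_A)-\mu\,\mathbf{1}_{\{j\ge2\}}$ on $w_i\otimes u_j$. The modes with $j=1$ carry eigenvalue $0$ for every $\mu$, while the modes with $j\ge2$ give $\lambda_i(YY^T)\lambda_j(L_A)-\mu$. Therefore $H\succeq0$ holds precisely when $\mu\le\min_{i}\min_{j\ge2}\lambda_i(YY^T)\lambda_j(L_A)=\lambda_1(YY^T)\,\lambda_2(L_A)$, which yields the stated (in fact tight) sufficient condition. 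The one point requiring care—and the reason the penalty is written with $P$ rather than the full $-\mu\|\Pi\|_F^2$—is the treatment of the null direction $\ones$ of $L_A$: with $\idm$ in place of $P$ the $j=1$ modes would contribute $-\mu<0$ and destroy positive semidefiniteness for every $\mu>0$, exactly as observed just before the proposition. Projecting out $\ones$ aligns the null space of the regularizer with that of $L_A$, so those modes contribute zero and only the strictly positive part of the spectrum of $L_A$ is regularized; getting this Kronecker-product bookkeeping right is the main obstacle.
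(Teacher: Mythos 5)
Your proof is correct and follows essentially the same route as the paper: the projector identity $\|P\Pi\|_F^2 = \Tr(\Pi^T\Pi) - 1$ for doubly stochastic $\Pi$ gives the equivalence up to the constant $\mu/p$, and convexity is read off the vectorized Hessian $\frac{1}{p}\left(YY^T \otimes L_A - \mu\,\idm \otimes P\right)$. If anything, your treatment of the spectral comparison is more careful than the paper's, which asserts that the eigenvalues of $YY^T\otimes L_A$ dominate those of $\mu\,\idm\otimes P$ without explicitly invoking the common eigenbasis coming from $L_A P = P L_A = L_A$; your mode-by-mode bookkeeping over the basis $w_i\otimes u_j$ is exactly what makes that step airtight.
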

\begin{proof}
Let us first remark that 
\BEAS
\|P\Pi\|_F^2 & = & \Tr(\Pi^T P^T P \Pi)  = \Tr(\Pi^T P\Pi)\\
& = & \Tr(\Pi^T (I - \mathbf{1}\mathbf{1}^T/n)  \Pi) = \Tr(\Pi^T\Pi -  \mathbf{1}\mathbf{1}^T/n) ) \\
& = & \Tr(\Pi^T \Pi) - 1
\end{eqnarray*}
where we used the fact that $P$ is the (symmetric) projector matrix onto the orthogonal complement of $\mathbf{1}$ and $\Pi$ is doubly stochastic (so $\Pi \mathbf{1}=\Pi^T \mathbf{1}=\mathbf{1} $).
We deduce that problem~\eqref{eq:relaxedPbReg1} has the same objective function as~\eqref{eq:relaxedPbRegIdeal} up to a constant. Moreover, it is convex when $\mu \leq \lambda_2(L_A) \lambda_1(YY^T)$ since the Hessian of the objective is given by 
\BEQ\label{eq:def-A}
-\mathcal{A}=\frac{1}{p}\,YY^T\otimes L_A - \frac{\mu}{p}\, \idm \otimes P
\EEQ
and the eigenvalues of $YY^T\otimes L_A$, which are equal to $\lambda_i(L_A) \lambda_j(YY^T)$ for all $i,j$ in $\{1,\ldots,n\}$ are all superior or equal to the eigenvalues of $\mu\idm \otimes P$ which are all smaller than $\mu$.
\end{proof}

To have $\mu$ strictly positive, we need $YY^T$ to be definite, which can be achieved w.h.p. by setting $p$ higher than~$n$ and sampling independent vectors $y$. The key motivation for including several monotonic vectors $y$ in the objective of~\eqref{eq:relaxedPbReg1} is to increase the value of $\lambda_1(YY^T)$. The higher this eigenvalue, the stronger the effect of regularization term in (7), which in turn improves the quality of the solution (all of this being somewhat heuristic of course). The problem of generating good matrices $Y$ with both monotonic columns and high values of $\lambda_1(YY^T)$ is not easy to solve however, hence we use randomization to generate $Y$.

\subsubsection{Semi-supervised problems}\label{ss:struct}
The QP relaxation above allows us to add structural constraints to the problem. For instance, in archeological applications, one may specify that observation $i$ must appear before observation $j$, i.e.~$\pi(i) < \pi(j)$. In gene sequencing applications, one may constrain the distance between two elements (e.g.~mate reads), which would be written $a\leq \pi(i)-\pi(j) \leq b$ and introduce an affine inequality on the variable $\Pi$ in the QP relaxation of the form $a \leq e_i ^T \Pi g - e_j^T\Pi g \leq b$. Linear constraints could also be extracted from a reference gene sequence. More generally, we can rewrite problem~\eqref{eq:relaxedPbReg1} with~$n_c$ additional linear constraints as follows
\BEQ\label{eq:relaxedPbRegAddCons}
\BA{ll}
\mbox{minimize} &  \frac{1}{p}\Tr(Y^T\Pi^T L_A\Pi Y) - \frac{\mu}{p}\|P\Pi\|_F^2\\
\mbox{subject to} &  D^T \Pi g + \delta \leq 0,\\
& \Pi \ones=\ones,\, \Pi^T\ones=\ones, \, \Pi \geq 0,
\EA\EEQ
where $D$ is a matrix of size $n \times (n_c+1)$ and $\delta$ is a vector of size $n_c$. The first column of $D$ is equal to $ e_1 - e_n$ and $\delta_1=1$ (to break symmetry).

\subsubsection{Sampling permutations from doubly stochastic matrices}\label{sec:sampling-proc}
This procedure is based on the fact that a permutation can be defined from a doubly stochastic matrix~$S$ by the order induced on a monotonic vector. A similar argument was used in \citep{Barv06} to round orthogonal matrices into permutations. Suppose we generate a {\em monotonic} random vector~$v$ and compute $Sv$. To each $v$, we can associate a permutation $\Pi$ such that $\Pi Sv$ is monotonically increasing. If $S$ is a permutation matrix, then the permutation $\Pi$ generated by this procedure will be constant, if $S$ is a doubly stochastic matrix but not a permutation, it might fluctuate. Starting from a solution $S$ to problem~\eqref{eq:relaxedPbReg1}, we can use this procedure to sample many permutation matrices $\Pi$ and we pick the one with lowest cost $g^T\Pi^TL_A\Pi g$ in the combinatorial problem~\eqref{eq:ser-pb}. We could also project $S$ on permutations using the Hungarian algorithm, but this proved more costly and less effective in our experiments.

\section{Algorithms}\label{s:algos}
The convex relaxation in~\eqref{eq:relaxedPbRegAddCons} is a quadratic program in the variable $\Pi\in\reals^{n\times n}$, which has dimension~$n^2$. For reasonable values of $n$ (around a few hundreds), interior point solvers such as MOSEK~\citep{Ande00} solve this problem very efficiently (the experiments in this paper were performed using this library). Furthermore, most pre-R matrices formed by squaring pre-Q matrices are very sparse, which considerably speeds up linear algebra. However, first-order methods remain the only alternative for solving~\eqref{eq:relaxedPbRegAddCons} beyond a certain scale. We quickly discuss below the implementation of two classes of methods: the conditional gradient (a.k.a.~Frank-Wolfe) algorithm, and accelerated gradient methods. Alternatively, \citep{Goem09} produced an extended formulation of the permutahedron using only $O(n\log n)$ variables and constraints, which can be used to write QP relaxations of 2-SUM with only $O(n\log n)$ variables. While the constant in these representations is high, more practical formulations are available with $O(n\log^2 n)$ variables. This formulation was tested by \citep{Lim14} while this paper was under review, and combined with an efficient interior point solver (GUROBI) provides significant speed-up.

\subsection{Conditional gradient} Solving~\eqref{eq:relaxedPbRegAddCons} using the conditional gradient algorithm in e.g.~\citep{Fran56} requires minimizing an affine function over the set of doubly stochastic matrices at each iteration. This amounts to solving a classical transportation (or matching) problem for which very efficient solvers exist~\citep{Port96}.

\subsection{Accelerated smooth optimization} On the other hand, solving~\eqref{eq:relaxedPbRegAddCons} using accelerated gradient algorithms requires solving a projection step on doubly stochastic matrices at each iteration~\citep{Nest03a}. Here too, exploiting structure significantly improves the complexity of these steps. Given some matrix $\Pi_0$, the Euclidean projection problem is written
\BEQ\label{eq:primalProjection}
\BA{ll}
\mbox{minimize} &  \frac{1}{2} \| \Pi - \Pi_0  \|_F ^2\\
\mbox{subject to} &  D^T \Pi g + \delta \leq 0,\\
& \Pi \ones=\ones,\, \Pi^T\ones=\ones, \, \Pi \geq 0
\EA\EEQ
in the variable $\Pi\in\reals^{n\times n}$, with parameter $g\in\reals^n$. The dual is written
\BEQ\label{eq:dualProjection}
\BA{ll}
\mbox{maximize} & -\frac{1}{2} \| x \ones^T + \ones y^T + D z g^T - Z \|_F^2
-\Tr(Z^T\Pi_0)\\ 
&+ x^T ( \Pi_0 \ones - \ones ) + y^T ( \Pi_0^T \ones - \ones ) 
+ z ( D^T \Pi_0 g + \delta ) \\
\mbox{subject to} &  z \geq 0, \, Z \geq 0
\EA
\EEQ
in the variables $Z\in\reals^{n\times n}$, $x,y\in\reals^n$ and $z\in\reals^{n_c}$. The dual optimizes over decoupled linear constraints in $(z,\ Z)$, while $x$ and $y$ are unconstrained.

Each subproblem is equivalent to computing a conjugate norm and can be solved in closed form. This means that, with independent constraints ($D$ full rank), at each iteration, explicit formulas are available to update variables block by block in the dual Euclidean projection problem~\eqref{eq:dualProjection} over doubly stochastic matrices (\cf~Algorithm~\ref{alg:BCAPDS}).
Problem~\eqref{eq:dualProjection} can thus be solved very efficiently by block-coordinate ascent, whose convergence is guaranteed in this setting~\citep{Bert98}, and a solution to~\eqref{eq:primalProjection} can be reconstructed from the optimum in~\eqref{eq:dualProjection}. 

The detailed procedure for block coordinate ascent in the dual Euclidean projection problem~\eqref{eq:dualProjection} is described in Algorithm~\ref{alg:BCAPDS}. We perform block coordinate ascent until the duality gap between the primal and the dual objective is below the required precision. Warm-starting the projection step in both primal and dual provided a very significant speed-up in our experiments.

\begin{algorithm}[ht]
\caption{Projection on doubly stochastic matrices.}
\label{alg:BCAPDS}
\begin{algorithmic} [1]
\REQUIRE A matrix $Z \in\reals_+^{n \times n}$, a vector $z\in\reals_+^{n_c}$, two vectors $x, y \in\reals^{n}$, a target precision $\epsilon$, a maximum number of iterations $N$.
\STATE Set $k=0$.
\WHILE{$\mbox{duality gap}> \epsilon$ \& $k \leq N$}
\STATE Update dual variables
 \[\left\{\BA{l}
Z = \max\{ {\mathbf 0},\  x \ones^T + \ones y^T + D z g^T - \Pi_0 \}\\
x =  \frac{1}{n} ( \Pi_0 \ones - (y^T\ones+1) \ones - D z g^T\ones + Z\ones)\\
y =  \frac{1}{n} ( \Pi_0^T \ones - (x^T\ones+1) \ones - g z^T D\ones + Z^T\ones)\\
z =  \frac{1}{\|g\|_2^2} \max\{0,\ (D^TD)^{-1}( D^T (Z+ \Pi_0) g + \delta - D^T x g^T\ones - D^T \ones  g^T y )\}\\
\EA\right.\]
\STATE Set $k=k+1$.
\ENDWHILE
\ENSURE A doubly stochastic matrix $\Pi$.
\end{algorithmic}
\end{algorithm}

\section{Applications \& numerical experiments}\label{s:apps}

We now study the performance of the relaxations detailed above in some classical applications of seriation. Other applications  not discussed here include: social networks, sociology, cartography, ecology, operations research, psychology \citep{Liiv10}.

In most of the examples below, we will compare the performance of the spectral solution, that of the QP relaxation in~\eqref{eq:relaxedPbReg1} and the semi-supervised seriation QP in~\eqref{eq:relaxedPbRegAddCons}. In the semi-supervised experiments, we randomly sample pairwise orderings either from the true order information (if known), or from noisy ordering information. We use a simple symmetric Erd\"os-R\'enyi model for collecting these samples, so that a pair of indices $(i,j)$ is included with probability $p$, with orderings sampled independently. \citet{Erdo60} show that there is a sharp phase transition in the connectivity of the sampled graphs, with the graphs being almost surely disconnected when $p<\frac{(1-\epsilon) \log n}{n}$ and almost surely connected when $p>\frac{(1+\epsilon) \log n}{n}$ for $\epsilon>0$ and $n$ large enough. Above that threshold, i.e. when $O(n\log n)$ pairwise orders are specified, the graph is fully connected so the full variable ordering is specified {\em if the ordering information is noiseless}. Of course, when the samples include errors, some of the sampled pairwise orderings could be inconsistent, so the total order is not fully specified.

\subsection{Archeology}
We reorder the rows of the Hodson's Munsingen dataset (as provided by \cite{Hods68} and manually ordered by \cite{Kend71}), to date 59 graves from 70 recovered artifact types (under the assumption that graves from similar periods contain similar artifacts). The results are reported in Table~\ref{tab:kendall}. We use a fraction of the pairwise orders in \cite{Kend71} to solve the semi-supervised version. Note that the original data contains errors, so Kendall's ordering cannot be fully consistent. In fact, we will see that the semi-supervised relaxation actually improves on Kendall's manual ordering.

In Figure~\ref{fig:musing} the first plot on the left shows the row ordering on 59 $\times$ 70 grave by artifacts matrix given by Kendall, the middle plot is the Fiedler solution, the plot on the right is the best QP solution from 100 experiments with different $Y$ (based on the combinatorial objective in~\eqref{eq:ser-pb}). The quality of these solutions is detailed in Table~\ref{tab:kendall}.

\begin{table}[hb]
\begin{center}
\begin{tabular}{r|c|c|c|c|c}
&\textbf{\cite{Kend71}}&\textbf{Spectral}&\textbf{QP Reg} &\textbf{QP Reg + 0.1\% }&\textbf{QP Reg + 47.5\%}\\\hline
{Kendall $\tau$}&1.00&0.75&0.73$\pm$0.22&0.76$\pm$0.16&0.97$\pm$0.01\\\hline
{Spearman $\rho$}&1.00&0.90&0.88$\pm$0.19&0.91$\pm$0.16&1.00$\pm$0.00\\\hline
{Comb. Obj.}&38520 &38903&41810$\pm$13960&43457$\pm$23004& 37602$\pm$775\\\hline
{\# R-constr.}&1556&1802&2021$\pm$484&2050$\pm$747& 1545$\pm$43\\
\end{tabular}
\vskip 1ex
\caption{Performance metrics (median and stdev over 100 runs of the QP relaxation, for Kendall's~$\tau$, Spearman's $\rho$ ranking correlations (large values are good), the objective value in~\eqref{eq:ser-pb}, and the number of R-matrix monotonicity constraint violations (small values are good), comparing Kendall's original solution with that of the Fiedler vector, the seriation QP in~\eqref{eq:relaxedPbReg1} and the semi-supervised seriation QP in~\eqref{eq:relaxedPbRegAddCons} with 0.1\% and 47.5\% pairwise ordering constraints specified. Note that the semi-supervised solution actually improves on both Kendall's manual solution and on the spectral ordering.\label{tab:kendall}}
\end{center}
\end{table}

\begin{figure}[pht]
\begin{center}
\includegraphics[scale=0.8]{./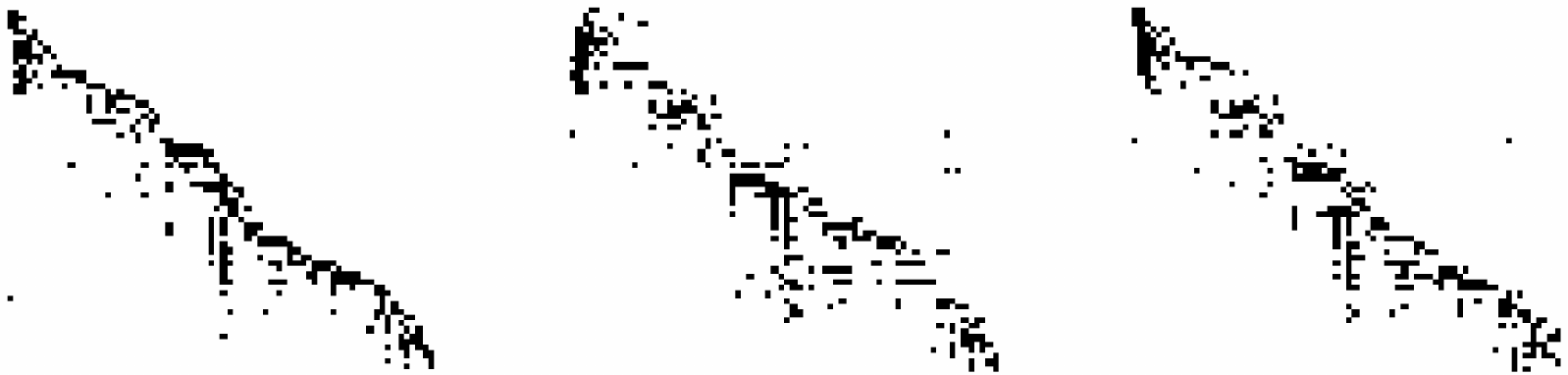}
\end{center}
\caption{The Hodson's Munsingen dataset: row ordering given by Kendall {\em (left)}, Fiedler solution {\em (center)},  best unsupervised QP solution from 100 experiments with different $Y$, based on combinatorial objective {\em (right)}.\label{fig:musing}}
\end{figure}

\subsection{Markov Chains}
Here, we observe many {\em disordered} samples from a Markov chain. The mutual information matrix of these variables must be decreasing with $|i-j|$ when ordered according to the true generating Markov chain (this is the ``data processing inequality'' in \cite[Th.\,2.8.1]{Cove12}), hence the mutual information matrix of these variables is a pre-R-matrix. We can thus recover the order of the Markov chain by solving the seriation problem on this matrix. In the following example, we try to recover the order of a Gaussian Markov chain written $X_{i+1}=b_i X_i + \epsilon_i$ with $\epsilon_i \sim N(0, \sigma_i^2)$. The results are presented in Table~\ref{tab:Markov} on 30 variables. We test performance in a noise free setting where we observe the randomly ordered model covariance,  in a noisy setting with enough samples (6000) to ensure that the spectral solution stays in a perturbative regime, and finally using much fewer samples (60) so the spectral perturbation condition fails. In Figure~\ref{fig:markov}, the first plot on the left shows the true Markov chain order, the middle plot is the Fiedler solution, the plot on the right is the best QP solution from 100 experiments with different $Y$ (based on combinatorial objective).

\begin{table}[htp]
\begin{center}
\begin{tabular}{r|c|c|c} 
&\textbf{No noise}&\textbf{Noise within spectral gap}&\textbf{Large noise}\\\hline
{True}&1.00$\pm$0.00&1.00$\pm$0.00&1.00$\pm$0.00\\\hline
{Spectral}&1.00$\pm$0.00&0.86$\pm$0.14&0.41$\pm$0.25\\\hline
{QP Reg}&0.50$\pm$0.34&0.58$\pm$0.31&0.45$\pm$0.27\\\hline
{QP + 0.2\% }&0.65$\pm$0.29&0.40$\pm$0.26&0.60$\pm$0.27\\\hline
{QP + 4.6\%}&0.71$\pm$0.08&0.70$\pm$0.07&0.68$\pm$0.08\\\hline
{QP + 54.3\%}&0.98$\pm$0.01&0.97$\pm$0.01&0.97$\pm$0.02\\
\end{tabular}
\vskip 1ex
\caption{Kendall's $\tau$ between the true Markov chain ordering, the Fiedler vector, the seriation QP in~\eqref{eq:relaxedPbReg1} and the semi-supervised seriation QP in~\eqref{eq:relaxedPbRegAddCons} with varying numbers of pairwise orders specified. We observe the (randomly ordered) model covariance matrix (no noise), the sample covariance matrix with enough samples so the error is smaller than half of the spectral gap, then a sample covariance computed using much fewer samples so the spectral perturbation condition fails.
 \label{tab:Markov}}
\end{center}\end{table}

\begin{figure}[htp]
\begin{center}
\includegraphics[scale=1]{./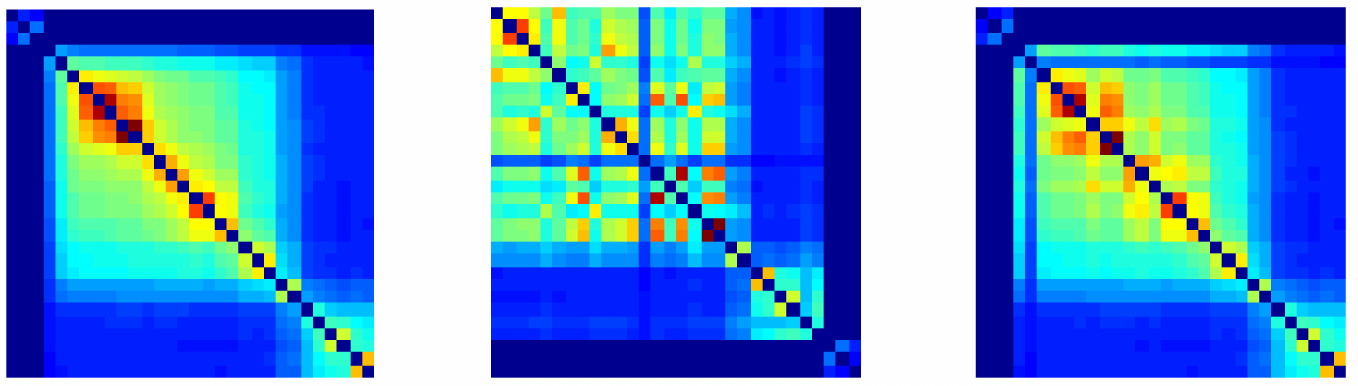}
\end{center}
\caption{Markov Chain experiments: true Markov chain order {\em (left)}, Fiedler solution {\em (center)}, best unsupervised QP solution from 100 experiments with different $Y$, based on combinatorial objective {\em (right)}.\label{fig:markov}}
\end{figure}

\subsection{Gene sequencing}
In next generation shotgun genome sequencing experiments, DNA strands are cloned about ten to a hundred times before being decomposed into very small subsequences called ``reads", each of them fifty to a few hundreds base pairs long. Current machines can only accurately sequence these small reads, which must then be reordered by ``assembly" algorithms, using the overlaps between reads. These short reads are often produced in pairs, starting from both ends of a longer sequence of known length, hence a rough estimate of the distance between these ``mate pairs'' of reads is known, giving additional structural information on the semi-supervised assembly problem.

Here, we generate artificial sequencing data by (uniformly) sampling reads from chromosome 22 of the human genome from NCBI, then store k-mer hit versus read in a binary matrix $C$ (a k-mer is a fixed sequence of~k base pairs). If the reads are ordered correctly and have identical length, this matrix is C1P, hence we solve the C1P problem on the $\{0,1\}$-matrix whose rows correspond to k-mers hits for each read, i.e.~the element $(i,j)$ of the matrix is equal to one if k-mer $j$ is present in read $i$.
The corresponding pre-R matrix obtained $CC^T$, which measures overlap between reads, is extremely sparse, as it is approximately band-diagonal with roughly constant bandwidth $b$ when reordered correctly, and computing the Fiedler vector can be done with complexity $O(bn\log n)$ w.h.p. using the Lanczos method \citep{Kucz92}, as it amounts to computing the second largest eigenvector of $\lambda_n(L) \idm - L$, where $L$ is the Laplacian of the matrix. In our experiments, computing the Fiedler vector from 250000 reads takes a few seconds using MATLAB's \texttt{eigs} on a standard desktop machine. 

In practice, besides sequencing errors (handled relatively well by the high coverage of the reads), there are often repeats in long genomes. If the repeats are longer than the k-mers, the C1P assumption is violated and the order given by the Fiedler vector is not reliable anymore. On the other hand, handling the repeats is possible using the information given by mate pairs, i.e.~reads that are known to be separated by a given number of base pairs in the original genome.  This structural knowledge can be incorporated into the relaxation~\eqref{eq:relaxedPbRegAddCons}. While our algorithm for solving~\eqref{eq:relaxedPbRegAddCons} only scales up to a few thousands base pairs on a regular desktop, it can be used to solve the sequencing problem hierarchically, i.e.~to refine the spectral solution. 

In Figure~\ref{fig:dna1}, we show the result of spectral ordering on simulated reads from human chromosome 22. The full R matrix formed by squaring the reads $\times$ kmers matrix is too large to be plotted in MATLAB and we zoom in on two diagonal block submatrices. In the first submatrix, the reordering is good and the matrix has very low bandwidth, the corresponding gene segment (called contig) is well reconstructed. In the second the reordering is less reliable, and the bandwidth is larger, so the reconstructed gene segment contains errors. 

\begin{figure}[phbt]
\begin{center}
\includegraphics[scale=0.45]{./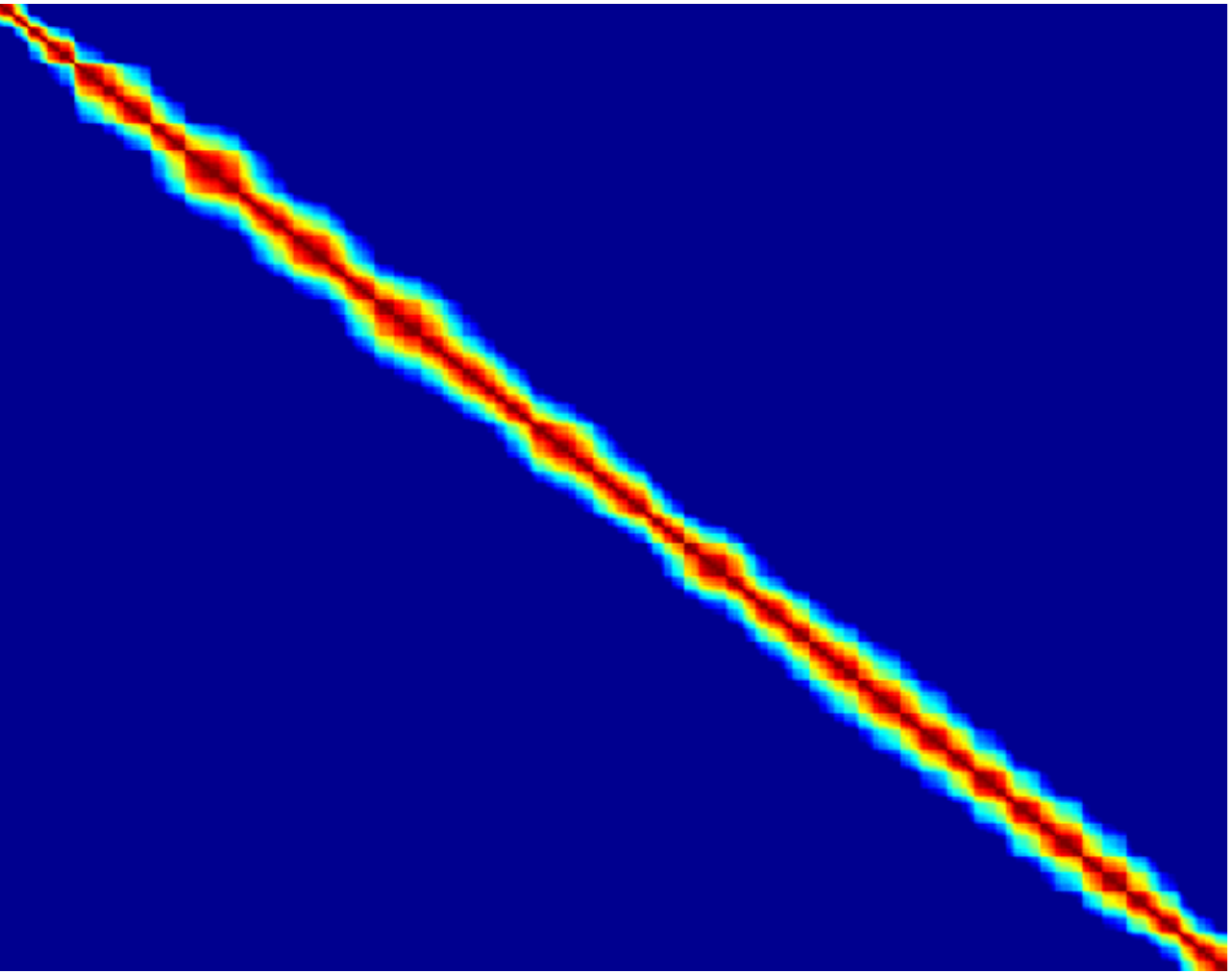}
\qquad \qquad
\includegraphics[scale=0.45]{./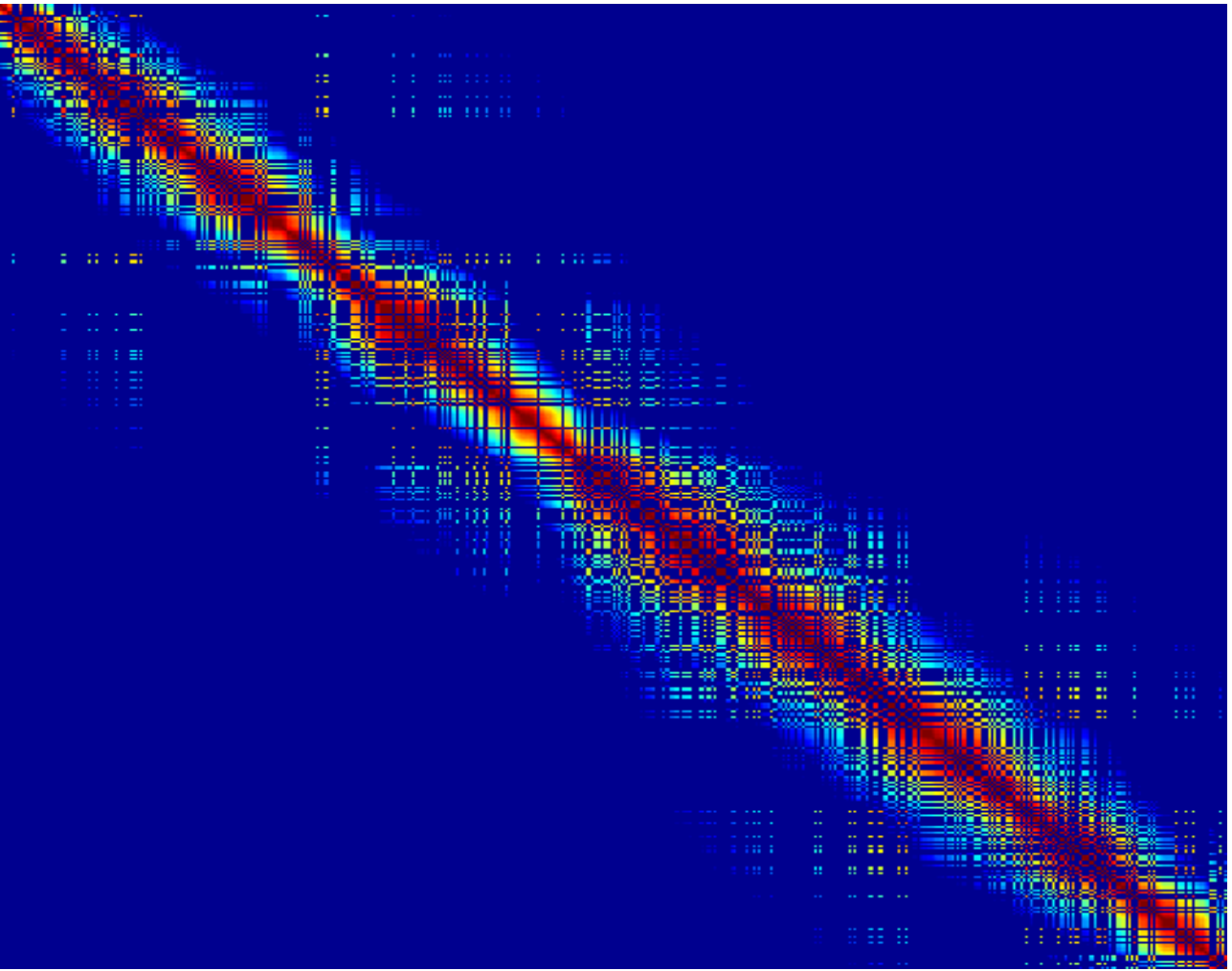}
\end{center}
\caption{We plot the $reads \times reads$ matrix measuring the number of common k-mers between read pairs, reordered according to the spectral ordering on two submatrices. \label{fig:dna1}}
\end{figure}

In Figure~\ref{fig:dna2}, we show recovered read position versus true read position for the Fiedler vector and the Fiedler vector followed by semi-supervised seriation, where the QP relaxation is applied to groups of reads (contigs) assembled by the spectral solution, on the 250\,000 reads generated in our experiments. The spectral solution orders most of these reads correctly, which means that the relaxation is solved on a matrix of dimension about $100$. We see that the number of misplaced reads significantly decreases in the semi-supervised seriation solution. Looking at the correlation between the true positions and the retrieved positions of the reads, both Kendall $\tau$ and Spearman $\rho$ are equal to one for Fiedler+QP ordering while they are equal to respectively 0.87 and 0.96 for Fiedler ordering alone. A more complete description of the assembly algorithm is given in the appendix.

\begin{figure}[phbt]
\begin{center}
\includegraphics[scale=0.4]{./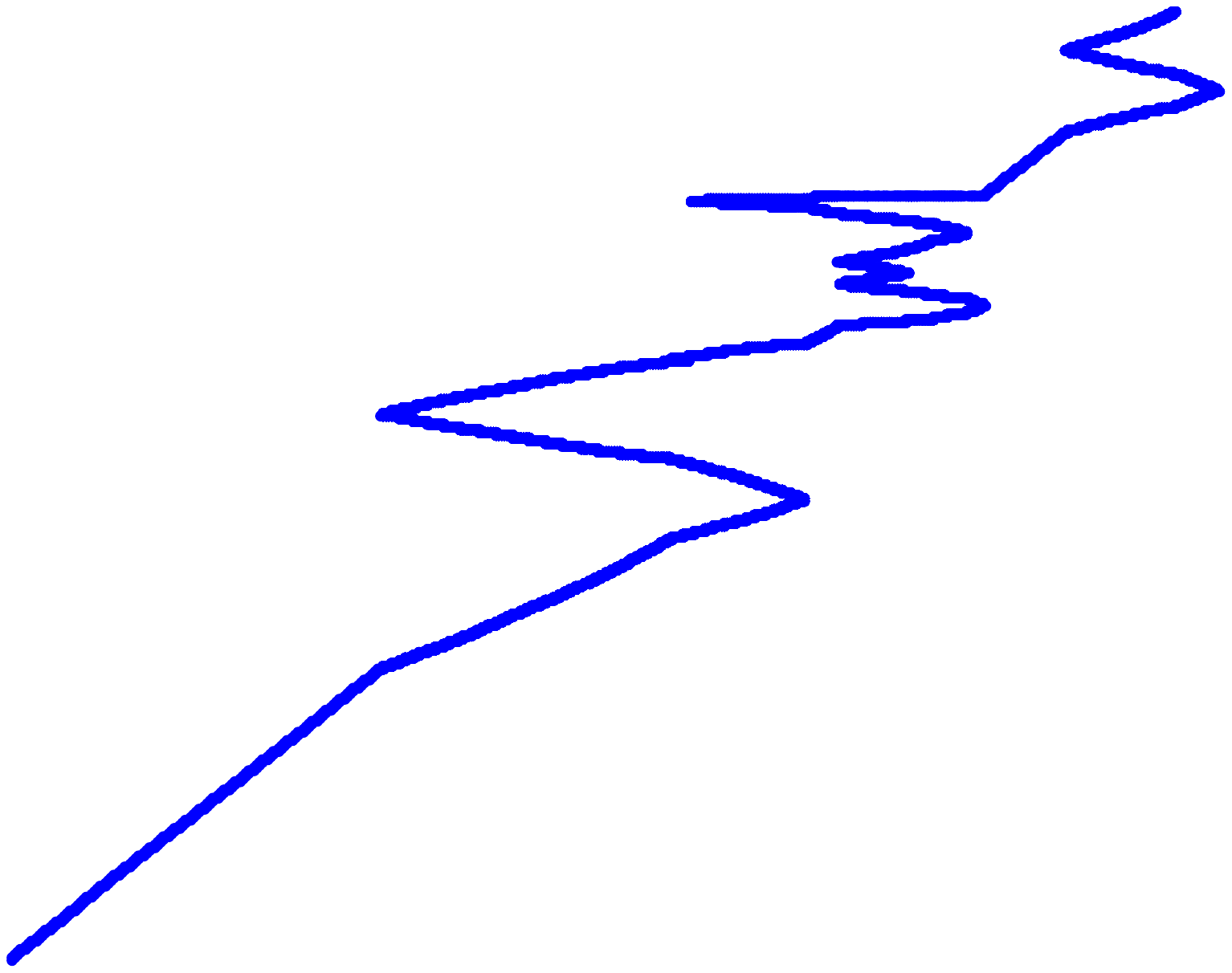}
\qquad
\includegraphics[scale=0.4]{./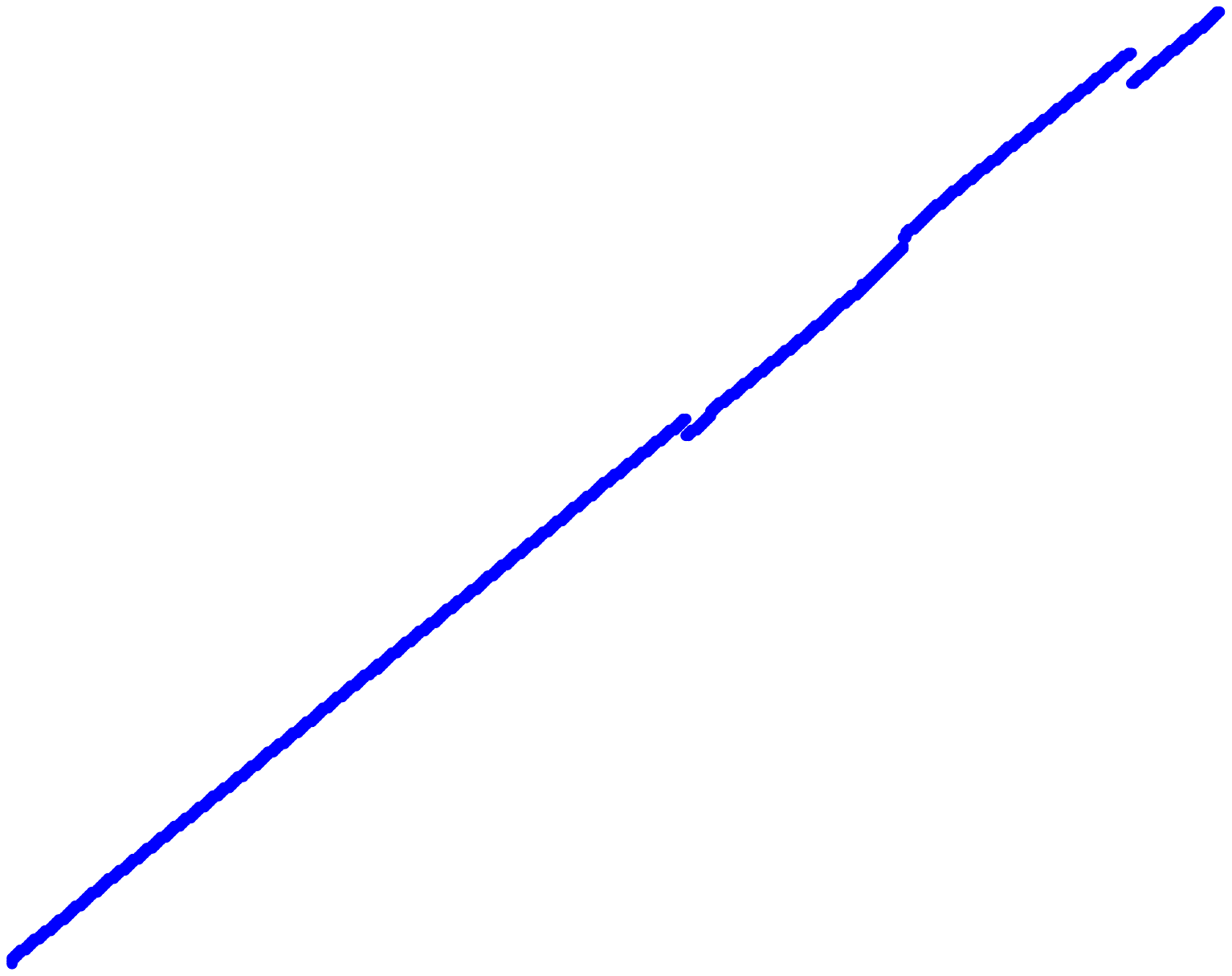}
\end{center}
\caption{We plot the Fiedler and Fiedler+QP read orderings versus true ordering. The semi-supervised solution contains much fewer misplaced reads. \label{fig:dna2}}
\end{figure}

\subsection{Generating $Y$}
We conclude by testing the impact of $Y$ on the performance of the QP relaxation in~\eqref{eq:relaxedPbRegIdeal} on a simple ranking example. In Figure~\ref{fig:Y}, we generate several matrices $Y \in \reals^{n\times p}$ as in \S\ref{sss:regularized} and  compare the quality of the solutions (permutations issued from the procedure described in~\ref{sec:sampling-proc}) obtained for various values of the number of columns $p$. On the left, we plot the histogram of the values of $g^T\Pi^TL_A\Pi g$ obtained for 100 solutions with random matrices $Y$ where $p=1$ (i.e. rank one). On the right, we compare these results with the average value of $g^T\Pi^TL_A\Pi g$ for solutions obtained with random matrices $Y$ with $p$ varying from 1 to~$5n$ (sample of 50 random matrices $Y$ for each value of $p$). The red horizontal line, represents the best solution obtained for $p=1$ over all experiments. By raising the value of $\lambda_1(YY^T)$, larger values of $p$ allow for higher values of $\mu$ in Proposition~\ref{prop:opt}, which seems to have a positive effect on performance until a point where $p$ is much larger than $n$ and the improvement becomes insignificant. We do not have an intuitive explanation for this behavior at this point.


\begin{figure}[phbt]
\begin{center}
\psfrag{objValue}[t][b]{Objective value}
\includegraphics[scale=0.49]{./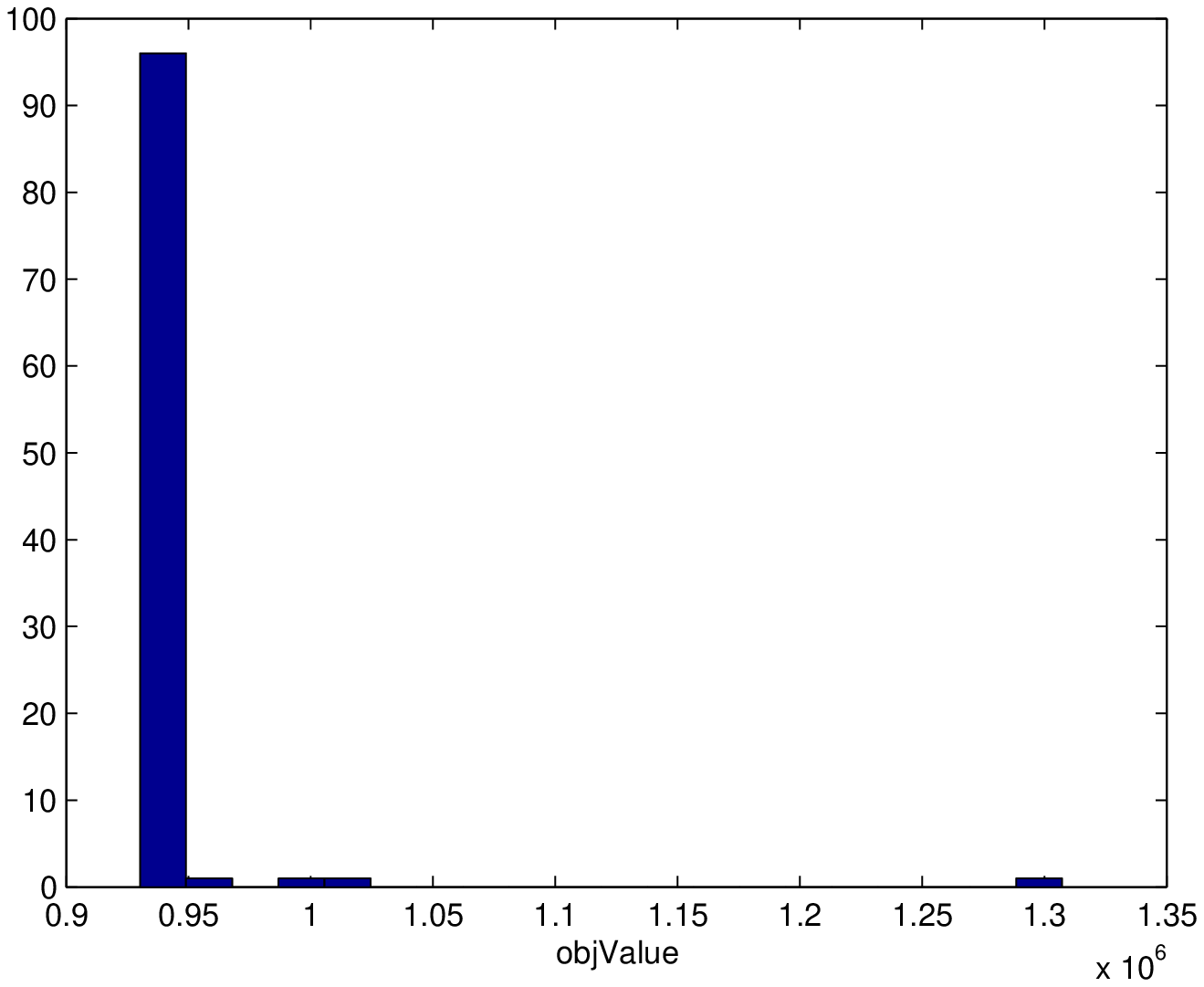}
\qquad
\psfrag{objValue}[b][t]{Objective value}
\psfrag{povern}[t][b]{$p/n$}
\includegraphics[scale=0.49]{./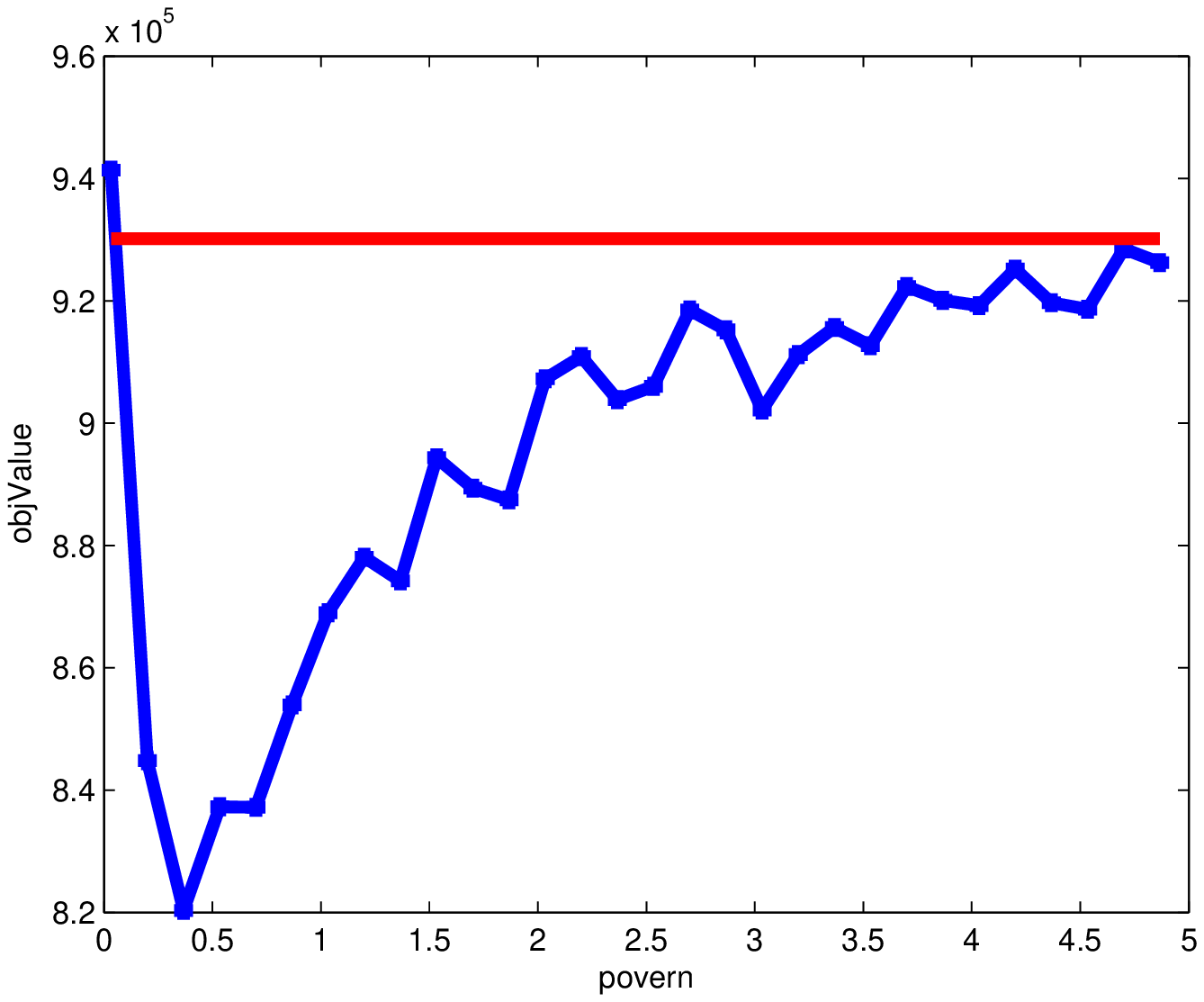}
\end{center}
\caption{{\em Left:} we plot the histogram of the values of $g^T\Pi^TL_A\Pi g$ obtained for 50 solutions with random matrices $Y$ where $p=1$ (i.e. rank one). {\em Right:} we compare these results with the average value of $g^T\Pi^TL_A\Pi g$ for solutions obtained with random matrices $Y$ with $p$ varying from 1 to $5n$. The red horizontal line, represents the best solution obtained for $p=1$ over all experiments. \label{fig:Y}}
\end{figure}

\section*{Acknowledgments}
AA, FF and RJ would like to acknowledge support from a European Research Council starting grant (project SIPA) and a gift from Google. FB would like to acknowledge support from a European Research Council starting grant (project SIERRA). A subset of these results was presented at NIPS 2013.

\section{Appendix}

In this appendix, we first briefly detail two semidefinite programming based relaxations for the 2-SUM problem, one derived from results in \citep{Nest98a,dAsp10}, the other adapted from work on the Minimum Linear Arrangement (MLA) problem in \citep{Even00,Feig00,Blum00} among many others. While their complexity is effectively too high to make them practical seriation algorithms, these relaxations come with explicit approximation bounds which aren't yet available for the QP relaxation in Section~\ref{ss:qp}. These SDP relaxations illustrate a clear tradeoff between approximation ratios and computational complexity: low complexity but unknown approx. ratio for the QP relaxation in~\eqref{eq:relaxedPb1}, high complexity and $\sqrt{n}$ approximation ratio for the first semidefinite relaxation, very high complexity but excellent $\sqrt{\log n}$ approximation ratio for the second SDP relaxation. The question of how to derive convex relaxations with nearly dimension independent approximation ratios (e.g. $O(\sqrt{\log n})$) and good computational complexity remains open at this point. 

We then describe in detail the data sets and procedures used in the DNA sequencing experiments of Section~\ref{s:apps}.

\subsection{SDP relaxations \& doubly stochastic matrices}\label{ss:approx-qp}
Using randomization techniques derived from \citep{Nest98a,dAsp10}, we can produce approximation bounds for a relaxation of the nonconvex QP representation of~\eqref{eq:ser-pb} derived in~\eqref{eq:relaxedPbReg1}, namely
\[\BA{ll}\label{eq:qp-sdp}
\mbox{minimize} &  \Tr(Y^T\Pi^T L_A\Pi Y) - {\mu}\|P\Pi\|_F^2\\
\mbox{subject to} & \Pi \ones=\ones,\, \Pi^T\ones=\ones, \, \Pi \geq 0,
\EA\]
which is a (possibly non convex) quadratic program in the matrix variable $\Pi\in\reals ^{n\times n}$, where $P=\idm - \frac{1}{n}\mathbf{1}\mathbf{1}^T$. We now set the penalty $\mu>0$ sufficiently high to ensure that the objective is concave and the constraint $\|\Pi\|=\sqrt{n}$ is saturated. From Proposition~\eqref{prop:opt} above, this means $\mu>\|L_A\|_2 \|Y\|_2^2$. The solution of this concave minimization problem over the convex set of doubly stochastic matrices will then be at an extreme point, i.e. a permutation matrix. We first rewrite the above QP as a more classical maximization problem over vectors
\[\BA{ll}
\mbox{maximize} &  \left(\vect{\Pi}^T{\mathcal A}\vect{\Pi}\right)^{1/2}\\
\mbox{subject to} & (\ones^T\otimes \idm)\vect{\Pi}=\ones,\, (\idm \otimes \ones^T)\vect{\Pi}=\ones, \, \Pi \geq 0.
\EA\]
We use a square root in the objective here to maintain the same homogeneity properties as in the linear arrangement problems that follow. Because the objective is constructed from a Laplacian matrix, we have $\ones^T{\mathcal A}\ones=0$ so the objective is invariant by a shift in the variables. We now show that the equality constraints can be relaxed without loss of generality. We first recall a simple scaling algorithm due to \citep{Sink64} which shows how to normalize to one the row and column sums of a strictly positive matrix. Other algorithms based on geometric programming with explicit complexity bounds can be found in e.g. \citep{Nemi99a}.

\begin{algorithm}[ht]
\caption{Matrix scaling (Sinkhorn).}
\label{alg:sinkhorn}
\begin{algorithmic} [1]
\REQUIRE A matrix $\Pi\in\reals^{m \times n}$
\FOR{$k=1$ to $N-1$}
\STATE Scale row sums to one: $\Pi_{k+1/2}=\diag(\Pi_k \ones)^{-1}\Pi_k$
\STATE Scale column sums to one: $\Pi_{k+1}=\Pi_{k+1/2}\diag(\ones^T \Pi_{k+1/2} )^{-1}$
\ENDFOR
\ENSURE A scaled matrix $\Pi_N$.
\end{algorithmic}
\end{algorithm}

The next lemma shows that the only matrices satisfying both $\|\Pi\|_F=\sqrt{n}$ and $\Pi \ones\leq\ones$, $\Pi^T\ones\leq\ones,$ with $\Pi \geq 0$ are doubly stochastic.

\begin{lemma} Let $\Pi \in \reals^{n \times n}$, if $\|\Pi\|_F=\sqrt{n}$ and $\Pi \ones\leq\ones$, $\Pi^T\ones\leq\ones,$ with $\Pi \geq 0$, then $\Pi$ is doubly stochastic.
\end{lemma}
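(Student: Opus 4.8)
The plan is to exploit the simple fact that the constraints $\Pi\geq 0$ and $\Pi\ones\leq\ones$ already force every entry of $\Pi$ to lie in $[0,1]$, so that the quadratic quantity $\|\Pi\|_F^2$ is dominated entrywise by the linear quantity $\ones^T\Pi\ones$. The hypothesis $\|\Pi\|_F=\sqrt n$ then pins this domination to equality, and saturating the resulting chain of inequalities is exactly what forces all row and column sums to equal one.

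First I would record the entrywise bound. For each pair $(i,j)$, nonnegativity together with $\sum_k \Pi_{ik}\leq 1$ gives $0\leq \Pi_{ij}\leq \sum_k\Pi_{ik}\leq 1$, so that $\Pi_{ij}^2\leq \Pi_{ij}$, with equality if and only if $\Pi_{ij}\in\{0,1\}$. Summing over all entries yields
\[
n=\|\Pi\|_F^2=\sum_{i,j}\Pi_{ij}^2\;\leq\;\sum_{i,j}\Pi_{ij}=\sum_{i=1}^n\Big(\sum_{j}\Pi_{ij}\Big)\;\leq\;n,
\]
where the last inequality uses $\sum_j\Pi_{ij}\leq 1$ for each of the $n$ rows. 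Since the two ends of this chain coincide, every inequality in it must in fact be an equality.

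The final step is to read off the two saturations. Equality on the right-hand side means $\sum_{i,j}\Pi_{ij}=n$; combined with $\sum_j\Pi_{ij}\leq1$ for all $i$, this forces $\sum_j\Pi_{ij}=1$ for every $i$, i.e.\ $\Pi\ones=\ones$. Writing the same total as a sum of column sums, $\sum_{i,j}\Pi_{ij}=\sum_j\big(\sum_i\Pi_{ij}\big)=n$, and using $\sum_i\Pi_{ij}\leq1$ (the hypothesis $\Pi^T\ones\leq\ones$) forces $\sum_i\Pi_{ij}=1$ for every $j$, i.e.\ $\Pi^T\ones=\ones$. Hence $\Pi$ is doubly stochastic. (In fact, equality on the left-hand side additionally gives $\Pi_{ij}\in\{0,1\}$, so $\Pi$ is even a permutation matrix, but the doubly stochastic conclusion is all that is needed.)

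There is no genuine obstacle here: the argument is entirely elementary. The only point requiring a little care is the equality analysis — one must note that the single scalar identity $\|\Pi\|_F=\sqrt n$ forces simultaneous equality at every step of the chain, and that the two relaxed inequalities $\Pi\ones\leq\ones$ and $\Pi^T\ones\leq\ones$ are invoked separately to saturate the row sums and the column sums respectively.
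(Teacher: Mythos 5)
Your proof is correct, but it takes a genuinely different route from the paper's. The paper establishes this lemma via Sinkhorn's matrix-scaling algorithm: if a feasible $\Pi$ is not doubly stochastic, a scaling iteration multiplies it by diagonal matrices with entries at least one, at least one strictly greater, so $\|\Pi\|_F$ strictly increases, and hence the only maximizers of the Frobenius norm over the feasible set are doubly stochastic matrices. Your argument is instead a direct equality-case analysis: $\Pi\geq 0$ and $\Pi\ones\leq\ones$ give $\Pi_{ij}\in[0,1]$, hence $\Pi_{ij}^2\leq\Pi_{ij}$, and the chain $n=\|\Pi\|_F^2\leq\ones^T\Pi\ones\leq n$ must be tight, which first saturates the row sums and then, re-reading the total mass $\ones^T\Pi\ones=n$ against $\Pi^T\ones\leq\ones$, the column sums. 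Your approach buys several things: it is completely elementary and self-contained; it does not need the strict positivity $\Pi>0$ that the paper's proof quietly assumes (Sinkhorn scaling requires it, whereas the lemma only assumes $\Pi\geq 0$); it sidesteps the delicate point that, after the row-scaling step, some column sums may exceed one, so the paper's claim that \emph{every} iteration scales by factors at least one requires more care; and your equality case $\Pi_{ij}^2=\Pi_{ij}$ yields the strictly stronger conclusion that $\Pi$ is a permutation matrix, which is exactly the property the surrounding appendix argument wants the saturated norm constraint to deliver. What the paper's approach buys is economy of exposition: the Sinkhorn algorithm is introduced there anyway, and the lemma is read off as a by-product of that machinery.
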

\begin{proof}
Suppose $\Pi \ones \leq \ones,\, \Pi^T\ones \leq\ones, \, \Pi > 0$, each iteration of Algorithm~\ref{alg:sinkhorn} multiplies $\vect{\Pi}$ by a diagonal matrix $D$ with diagonal coefficients greater than one, with at least one coefficient strictly greater than one if $\Pi$ is not doubly stochastic, hence $\|\Pi\|_F$ is strictly increasing if $\Pi$ is not doubly stochastic. This means that the only maximizers of $\|\Pi\|_F$ over the feasible set of~\eqref{eq:qp-sdp} are doubly stochastic matrices.
\end{proof}

We let $z=\vect{\Pi}$, the above lemma means that problem~\eqref{eq:qp-sdp} is equivalent to the following QP
\BEQ\label{eq:qp-ds}\tag{QP}
\BA{ll}
\mbox{maximize} &  \|{\mathcal A}^{1/2}z\|_2\\
\mbox{subject to} & (\ones^T\otimes \idm)z \leq \ones,\, (\idm \otimes \ones^T)z \leq \ones,\\ 
& z \geq 0,
\EA\EEQ
in the variable $z\in\reals^{n^2}$. Furthermore, since permutation matrices are binary matrices, we can impose the redundant constraints that $z_i\in\{0,1\}$ or equivalently $z_i^2=z_i$ at the optimum. Lifting the quadratic objective and constraints as in \citep{Shor87,Lova91} yields the following relaxation
\BEQ\label{eq:sdp-ds}\tag{SDP1}
\BA{ll}
\mbox{maximize} &  \Tr({\mathcal A}Z)\\
\mbox{subject to} & (\ones^T\otimes \idm)z \leq \ones,\, (\idm \otimes \ones^T)z \leq \ones,\\ 
& Z_{ii}=z_i,\,Z_{ij}\geq 0, \quad i,j=1,\ldots,n,\\
& 
\left(\BA{cc}
Z & z\\
z^T & 1
\EA\right)
\succeq 0,
\EA\EEQ
which is a semidefinite program in the matrix variable $Z\in\symm_{n^2}$ and the vector $z\in\reals^{n^2}$. By adapting a randomization argument used in the MaxCut relaxation bound in \citep{Nest98a} and adapted to the $k$-dense-subgraph problem in \citep{dAsp10}, we can show the following $O(\sqrt{n})$ approximation bound on the quality of this relaxation.

\begin{proposition}\label{prop:qp-ds}
Let OPT be the optimal value of problem~\eqref{eq:qp-ds} and SDP1 be that of~\eqref{eq:sdp-ds}, then
\[
0 \leq \frac{\Tr(\mathcal{A}G)}{4 n}+\frac{SDP1}{2\pi n} \leq OPT^2 \leq SDP1.
\]
with $G_{ij}=\sqrt{Z_{ii}Z_{jj}}$, $i=1,\ldots,n$ and $\Tr(\mathcal{A}G)\leq 0$.
\end{proposition}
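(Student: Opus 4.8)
The plan is to establish the two outer inequalities by a lifting/relaxation argument and the two inner ones by a Gaussian randomized-rounding argument, in the spirit of the MaxCut bound of \citep{Nest98a} and its adaptation to dense-subgraph problems in \citep{dAsp10}.

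First I would prove the right-hand inequality $OPT^2\le SDP1$. By the vertex argument preceding the statement, any maximizer $z$ of~\eqref{eq:qp-ds} is a permutation matrix, so its entries satisfy $z_i^2=z_i$ and $z_iz_j\ge 0$. Hence the rank-one lift $(Z,z)$ with $Z=zz^T$ satisfies every constraint of~\eqref{eq:sdp-ds}: the diagonal identity $Z_{ii}=z_i$, the nonnegativity $Z_{ij}\ge 0$, the same linear (sub)stochastic constraints, and the Schur-complement condition $\left(\begin{smallmatrix}Z&z\\z^T&1\end{smallmatrix}\right)\succeq 0$. Since $\Tr(\mathcal A Z)=z^T\mathcal A z=OPT^2$ at this point, $OPT^2$ is attained by a feasible point of~\eqref{eq:sdp-ds}, so $OPT^2\le SDP1$.

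For the lower bounds, let $(Z,z)$ be optimal for~\eqref{eq:sdp-ds}. The Schur-complement constraint gives $Z\succeq zz^T\succeq 0$, so I can form the correlation matrix $\bar Z$ with $\bar Z_{ij}=Z_{ij}/\sqrt{Z_{ii}Z_{jj}}=Z_{ij}/G_{ij}$ (setting the coordinate to zero whenever $Z_{ii}=z_i=0$), which is positive semidefinite with unit diagonal. I would sample $u\sim\mathcal N(0,\bar Z)$ and round to $z'_i=\tfrac{1}{\sqrt n}\sqrt{Z_{ii}}\,\mathbf 1\{u_i\ge 0\}$. The scaling $1/\sqrt n$ is exactly what forces deterministic feasibility: $z'\ge 0$ is trivial, and since $z=\vect{\Pi}$ with $\Pi$ (sub)stochastic, Cauchy--Schwarz gives $\sum_j\sqrt{\Pi_{ij}}\le \sqrt n\,(\sum_j\Pi_{ij})^{1/2}\le\sqrt n$, so every row and column sum of the reshaped $z'$ is at most one. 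Thus each realization $z'$ is feasible for~\eqref{eq:qp-ds}, whence $OPT^2\ge z'^T\mathcal A z'$ pointwise and therefore $OPT^2\ge \mathbb E[z'^T\mathcal A z']$.

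It remains to evaluate and lower-bound this expectation. Using the orthant probability $\Pr[u_i\ge 0,u_j\ge 0]=\tfrac14+\tfrac1{2\pi}\arcsin\bar Z_{ij}$ and the identity $G_{ij}\bar Z_{ij}=Z_{ij}$, I obtain
\[
\mathbb E[z'^T\mathcal A z']=\frac{\Tr(\mathcal A G)}{4n}+\frac{1}{2\pi n}\,\Tr\!\big((\mathcal A\odot G)\,\arcsin[\bar Z]\big),
\]
where $\odot$ is the entrywise (Hadamard) product and $\arcsin[\cdot]$ is applied entrywise. This already yields the leftmost inequality: since $\mu$ was chosen large enough in Proposition~\ref{prop:opt} to make the objective concave, i.e. $\mathcal A\succeq 0$, the variable $z'^T\mathcal A z'$ is nonnegative for every realization, so its expectation is nonnegative. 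To reach $SDP1$ I would invoke Nesterov's inequality $\arcsin[\bar Z]\succeq\bar Z$ (its entrywise power series has nonnegative coefficients, so every Hadamard power of the correlation matrix $\bar Z$ is positive semidefinite by the Schur product theorem) together with $\mathcal A\odot G\succeq 0$ (again Schur product, using $\mathcal A\succeq 0$ and the rank-one $G=z^{1/2}(z^{1/2})^T\succeq 0$). Then $\Tr\big((\mathcal A\odot G)(\arcsin[\bar Z]-\bar Z)\big)\ge 0$, hence $\Tr\big((\mathcal A\odot G)\arcsin[\bar Z]\big)\ge\Tr\big((\mathcal A\odot G)\bar Z\big)=\Tr(\mathcal A Z)=SDP1$, giving the inner lower bound. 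Finally the sign of the scalar $\Tr(\mathcal A G)=(z^{1/2})^T\mathcal A\,z^{1/2}$ follows by reading it as a Rayleigh quotient of $\mathcal A$. The main obstacle is the rounding design: one must commit to a single construction that is feasible with probability one (hence the $1/\sqrt n$ normalization and the Cauchy--Schwarz estimate, which is precisely where the factor $1/n$ enters) while keeping the expected objective expressible through $\bar Z$ so that the two Schur-product positivity facts apply cleanly; the definiteness bookkeeping for $\mathcal A$ (using that $P$ and $L_A$ commute, so the concavity threshold on $\mu$ indeed gives $\mathcal A\succeq 0$) is the delicate supporting step.
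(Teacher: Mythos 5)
Your proposal follows the paper's proof essentially step for step: the rank-one lifting at a permutation-matrix maximizer to get $OPT^2\leq SDP1$, the Gaussian sign rounding scaled by $1/\sqrt{n}$ with the Cauchy--Schwarz feasibility estimate, Sheppard's orthant-probability formula, and the combination of $\arcsin[\bar Z]\succeq \bar Z$ with the Schur product theorem are exactly the paper's ingredients. In fact your version of the key estimate points the right way: you conclude $\Tr\bigl((\mathcal{A}\odot G)\arcsin[\bar Z]\bigr)\geq \Tr(\mathcal{A}Z)=SDP1$, hence $\Expect[z'^T\mathcal{A}z']\geq \Tr(\mathcal{A}G)/(4n)+SDP1/(2\pi n)$, which together with pointwise feasibility of $z'$ is precisely what the middle inequality requires; the paper's own display writes this chain with ``$\leq$'', an evident typo, since that direction would prove nothing.

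There is, however, one genuine gap, in your treatment of the leftmost inequality. You argue that $0\leq \Tr(\mathcal{A}G)/(4n)+SDP1/(2\pi n)$ because $z'^T\mathcal{A}z'\geq 0$ for every realization, so its expectation is nonnegative. This is a non sequitur: the displayed quantity is a \emph{lower} bound on that expectation (that is what the $\arcsin$ step gives), so nonnegativity of the expectation says nothing about the sign of the smaller quantity. The repair is already contained in your closing remark: with $s_i=\sqrt{z_i}$ one has $G=ss^T$, so $\Tr(\mathcal{A}G)=s^T\mathcal{A}s\geq 0$ because $\mathcal{A}\succeq 0$, and $SDP1\geq OPT^2\geq 0$, so the sum is nonnegative term by term. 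Note, though, that this forces $\Tr(\mathcal{A}G)\geq 0$, which contradicts the side claim ``$\Tr(\mathcal{A}G)\leq 0$'' in the statement; that claim is not established in the paper's proof either (it is never mentioned there), and under the standing hypothesis $\mathcal{A}\succeq 0$ it can only hold when $s$ lies in the kernel of $\mathcal{A}$. So on this last point your Rayleigh-quotient reading is the correct one, and the discrepancy lies in the paper's statement rather than in your argument --- but you should say so explicitly instead of leaving the sign unstated.
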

\begin{proof}
The fact that ${\mathcal A}\succeq 0$ by construction shows $0\leq OPT^2\leq SDP1$. Let $\xi\sim\mathcal{N}(0,Z)$, and define
\[
y_i=
\left\{\BA{cl}
\sqrt{z_i} & \mbox{if $\xi_i \geq 0$}\\ 
0 & \mbox{otherwise.}
\EA\right.
\]
We write $C=\diag(Z)^{-1/2}Z\diag(Z)^{-1/2}$ the correlation matrix associated with $Z$ (under the convention that $C_{ij}=0$ whenever $Z_{ii}Z_{jj}=0$). A classical result from \citep{Shep00} (see also \citep[p.95]{John72}) shows
\[
\Expect[y_i y_j]=\sqrt{z_iz_j}\left(\frac{1}{4}+\frac{1}{2\pi} \arcsin(C)\right), \quad i=1,\ldots,n,
\]
and $\mathcal{A}\succeq 0$ together with $\arcsin(C)\succeq C$ (with the $\arcsin(\cdot)$ taken elementwise) and $z_i=Z_{ii}$ means that, writing $G_{ij}=\sqrt{z_iz_j}=\sqrt{Z_{ii}Z_{jj}}$, we get 
\BEAS
\Expect[y^T\mathcal{A}y] &= &\Expect[\Tr(\mathcal{A}yy^T)]\\
& = & \Tr\left(\mathcal{A}\left(G\circ \left(\frac{1}{4}\ones\ones^T+\frac{1}{2\pi} \arcsin(C)\right)\right)\right)\\
& \leq & \Tr\left(\mathcal{A}\left(\frac{1}{4}G+\frac{1}{2\pi} Z\right)\right)\\
& = & \frac{1}{4}\Tr(\mathcal{A}G)+\frac{1}{2\pi} SDP1,
\EEAS
because Schur's theorem shows that $A\circ B \succeq 0$ when $A,B \succeq 0$. It remains to notice that, because $(\ones^T\otimes \idm)z \leq \ones,$ and $(\idm \otimes \ones^T)z \leq \ones$, with $z\geq 0$, then 
\[
(\ones^T\otimes \idm)\sqrt{z} \leq \sqrt{n} \ones, \quad \mbox{and} \quad (\idm \otimes \ones^T)\sqrt{z} \leq \sqrt{n} \ones,
\]
so all the points $y$ generated using this procedure are feasible for~\eqref{eq:qp-ds} if we scale them by a factor $\sqrt{n}$. 
\end{proof}

While the $O(\sqrt{n})$ bound grows relatively fast with problem dimension, remember that the problem has $n^2$ variables because it is written on permutation {\em matrices}. In what follows, we will see that better theoretical approximation bounds can be found if we write the seriation problem directly over permutation {\em vectors}, which is of course a much more restrictive formulation.

\subsection{SDP relaxations \& minimum linear arrangement}\label{ss:mla}
Several other semidefinite relaxations have been derived for the 2-SUM problem and the directly related 1-SUM, or {\em minimum linear arrangement} (MLA) problem. While these relaxations have very high computational complexity, to the point of being impractical, they come with excellent approximation bounds. We briefly recall these results in what follows. The 2-SUM minimization problem~\eqref{eq:2sum-pb} is written (after taking square roots)
\BEQ\label{eq:sqrt-2sum}
\BA{ll}
\mbox{minimize} & \left(\sum_{i,j=1}^n A_{ij}(\pi(i)-\pi(j))^2\right)^\frac{1}{2}\\
\mbox{subject to} & \pi \in \mathcal{P}.
\EA\EEQ
in the variable $\pi\in \mathcal{P}$ which is a permutation of the vector $(1,\ldots,n)^T$. \citet{Even00,Feig00,Blum00} form the following semidefinite relaxation 
\BEQ\label{eq:SDP2}\tag{SDP2}
\BA{ll}
\mbox{minimize} & \sum_{i,j=1}^n A_{ij}X_{ij}\\
\mbox{subject to} & \frac{1}{|S|} \sum_{j\in S} (X_{ii}-2X_{ij}+X_{jj}) \geq \frac{1}{6}(|S|/2+1)(|S|+1),\quad \mbox{for all }S\subset[1,n],~i=1,\ldots,n\\
&\frac{1}{|S|} \sum_{k\in S} \Delta^2(i,j,k)\geq \epsilon (X_{ii}-2X_{ij}+X_{jj}) |S|^2,\quad \mbox{for all }S\subset[1,n],~i,j=1,\ldots,n\\
& X\succeq 0,\,X_{ij}\geq 0, \quad i,j=1,\ldots,n
\EA\EEQ
in the variable $X\in\symm_n$, where $\epsilon >0$ and $\Delta(i,j,k)$ is given by the determinant
\[
\Delta(i,j,k)=\left|
\BA{cc}
X_{jj}-2X_{ij}+X_{ii} & X_{jk}-X_{ij}-X_{jk}+X_{ii}\\
X_{jk}-X_{ij}-X_{jk}+X_{ii} & X_{kk}-2X_{ik}+X_{ii}
\EA\right|.
\]
\citep[Th.\,2]{Blum00} shows that if OPT is the optimal value of the 2-SUM problem~\eqref{eq:sqrt-2sum} and SDP2 the optimal value of the relaxation in~\eqref{eq:SDP2}, then 
\[
SDP2 ~(\log n)^{-1/2} \leq OPT \leq SDP2~(\log n)^{3/2}.
\]
While problem~\eqref{eq:SDP2} has an exponential number of constraints, efficient linear separation oracles can be constructed for the last two spreading constraints, hence the problem can be solved in polynomial time \citep{Grot88}. 

Tighter bounds can be obtained by exploiting approximation results on the minimum linear arrangement problem, noting that, after taking the square root of the objective, the 2-SUM problem is equivalent to
\BEQ\label{eq:minmax-mla}
\min_{\pi \in \mathcal{P}}~\max_{\{\|V\|_F\leq 1,\, V \geq 0\}}~ \sum_{i,j=1}^n V_{ij}A_{ij}^{1/2}|\pi(i)-\pi(j)|
\EEQ
in the variables $\pi\in \mathcal{P}$ and $V\in\reals^{n \times n}$ (note that this is true for the support function of any set contained in the nonnegative orthant). Using results in \citep{Rao05,Feig07,Char10}, the minimum linear arrangement problem, written
\BEQ\label{eq:mla}\tag{MLA}
\min_{\pi \in \mathcal{P}}~ \sum_{i,j=1}^n W_{ij}|\pi(i)-\pi(j)|
\EEQ
over the variable $\pi\in \mathcal{P}$, with nonnegative weights $W\in\reals^{n \times n}$, can be relaxed as
\BEQ\label{eq:SDP3}\tag{SDP3}
\BA{ll}
\mbox{minimize} & \sum_{i,j=1}^n W_{ij}(X_{ii}-2X_{ij}+X_{jj})\\
\mbox{subject to} & \frac{1}{|S|} \sum_{j\in S} (X_{ii}-2X_{ij}+X_{jj}) \geq \frac{|S|^2}{5},\quad \mbox{for all }S\subset[1,n],~i=1,\ldots,n\\
 & (X_{ii}-2X_{ij}+X_{jj}) \leq (X_{ii}-2X_{ik}+X_{kk}) + (X_{kk}-2X_{kj}+X_{jj}),\quad i,j,k=1,\ldots,n\\
& (X_{ii}-2X_{ij}+X_{jj}) \geq 1,\quad i,j=1,\ldots,n\\
& X \succeq 0,
\EA\EEQ
in the variable $X\in\symm_n$. The constraints above ensure that $d_{ij}=(X_{ii}-2X_{ij}+X_{jj})$ is a squared Euclidean metric (hence a metric of negative type). If MLA is the optimal value of the minimum linear arrangement problem~\eqref{eq:mla} and SDP3 the optimum of the relaxation in~\eqref{eq:SDP3}, \citep[Th.\,2.1]{Feig07} and \citep{Char10} show that
\[
SDP3 \leq MLA \leq SDP3~O(\sqrt{\log n}\, \log\log n),
\]
which immediately yields a convex relaxation with $O(\sqrt{\log n}\, \log\log n)$ approximation ratio for the minmax formulation of the 2-SUM problem in~\eqref{eq:minmax-mla}.

\subsection{Procedure for gene sequencing}
We first order all the reads using the spectral algorithm. Then, in order to handle repeats in the DNA sequence,  we adopt a divide and conquer approach and reorder smaller groups of reads partitioned using the spectral order. Finally we use the information given by mate pairs to reorder the resulting clusters of reads, using the QP relaxation. Outside of spectral computations which take less than a minute in our experiments, most computations can be naively parallelized. The details of the procedure are given below.

\begin{itemize}
\item Extract uniformly reads of length a few hundreds bp (base pairs) from DNA sequence. In our experiments, we artificially extract reads of length 200 bp from the true sequence of a million bp of the human chromosome 22. We perform a high coverage (each bp is contained in approx. 50 reads) uniform sampling. To replicate the setting of real sequencing data, we extract pairs of reads, with a distance of 5000 bp between each ``mate" pairs.

\item Extract all possible k-mers from reads, i.e.~for each read, record all subsequence of size k. We use k=100 in our experiments. The size of k-mers may be tuned to deal with noise in sequencing data (use small k) or repeats (use large k).
\item Solve the C1P problem on the $\{0,1\}$-matrix whose rows correspond to k-mers hits for each read, i.e.~the element $(i,j)$ of the matrix is equal to one if k-mer $j$ is included in read $i$.
Note that solving this C1P problem corresponds to reordering the similarity matrix between reads whose element $(r,s)$ is the number of shared k-mers between reads $r$ and $s$. In the presence of noise in sequencing data, this similarity matrix can be made more robust by recomputing for instance an edit distance between reads sharing k-mers.
Moreover, if there are no repeated k-mers in the original sequence, i.e.~a k-mer appears in two reads only if they overlap in the original sequence, then the C1P problem is solved exactly by the spectral relaxation and the original DNA sequence is retrieved by concatenating the overlapping reordered reads. Unfortunately, for large sequences, repeats are frequent and the spectral solution ``mixes" together different areas of the original sequence. We deal with repeats in what follows.
\item We extract contigs from the reordered reads: extract with high coverage (e.g.~10) sequences of a few thousands reads from the reordered sequence of reads (250\,000 reads in our experiments). Although there were repeats in the whole sequence, a good proportion of the contigs do not contain reads with repeats. By reordering each contig (using the spectral relaxation) and looking at the corresponding similarity (R-like) matrix, we can discriminate between ``good" contigs (with no repeats and therefore a perfectly reordered similarity matrix which is an R-matrix) and ``bad" contigs (with repeats and a badly reordered similarity matrix).
\item Reorder the ``good" contigs from the previous step using the spectral relaxation and agglomerate overlapping contigs. The aggregation can be done using again the spectral algorithm on the sub matrix of the original similarity matrix corresponding to the two clusters of reads.
Now there should be only a few (long) contigs left (usually less than a few hundreds in our experiments).
\item Use the mate pairs to refine the order of the contigs with the QP relaxation to solve the semi-supervised seriation problem. Gaps are filled by incorporating the reads from the ``bad" contigs (contigs with repeats).
\end{itemize}

Overall, the spectral preprocessing usually shrinks the ordering problem down to dimension $n\sim 100$, which is then solvable using the convex relaxations detailed in Section~\ref{s:relax}.



{\bibliographystyle{agsm}
\bibliography{/Users/aspremon/Dropbox/Research/Biblio/MainPerso.bib}}

\end{document}